\newtheorem{theorem}{Theorem}[section]
\newtheorem{cor}[theorem]{Corollary}
\newtheorem{lemma}[theorem]{Lemma}
\numberwithin{equation}{subsection}
\newtheorem{definition}[theorem]{Definition}
\title{Estimates of sums related to the Nyman-Beurling criterion for the Riemann Hypothesis}
\author{Helmut Maier and Michael Th. Rassias}
\date{\today}
\address{Department of Mathematics, University of Ulm, Helmholtzstrasse 18, 89081 Ulm, Germany.}
\email{helmut.maier@uni-ulm.de}
\address{Institute of Mathematics, University of Zurich, CH-8057, Zurich, Switzerland
 \& Institute for Advanced Study, Program in Interdisciplinary Studies,
1 Einstein Dr, Princeton, NJ 08540, USA.}
\email{michail.rassias@math.uzh.ch, michailrassias@math.princeton.edu}\thanks{}
\begin{document}

\maketitle
 
\begin{abstract} 
We give an estimate for sums appearing in the Nyman-Beurling criterion for the
Riemann Hypothesis containing the M\"obius function. The estimate is remarkably
sharp in comparison to estimates of other sums containing the M\"obius function.
The methods intensively use tools from the theory of continued fractions and 
from the theory of Fourier series.

\textbf{Key words:} Riemann Hypothesis, Riemann zeta function, Nyman-Beurling-B\'aez-Duarte criterion.\\
\textbf{2000 Mathematics Subject Classification:} 30C15, 11M26, 42A16, 42A20
\newline

\end{abstract}
\section{Introduction and statement of result}
According to the approach of Nyman-Beurling-B\'aez-Duarte (see \cite{baez1}, \cite{BEC}) to the Riemann Hypothesis, the Riemann Hypothesis is true if and only if 
$$\lim_{N\rightarrow \infty} d_N^2=0\:,$$
where
\[
d_N^2=\inf_{D_N}\frac{1}{2\pi}\int_{-\infty}^\infty\left|1-\zeta D_N\left(\frac{1}{2}+it\right)\right|^2\frac{dt}{\frac{1}{4}+t^2}\tag{1.1}
\]
and the infimum is over all Dirichlet polynomials 
$$D_N(s):=\sum_{n=1}^{N}\frac{a_n}{n^s}\:,\ a_n\in\mathbb{C}\:,$$
of length $N$ (see \cite{bcf}).\\
Various authors (cf. \cite{baez1}, \cite{baez2}, \cite{burnol}) have investigated the question, which 
asymptotics hold for $d_N$ if the Riemann Hypothesis is true.\\
In \cite{bcf} the following was shown:\\
If the Riemann Hypothesis is true and if 
$$\sum_{|Im(\rho)|\leq T}\frac{1}{|\zeta'(\rho)|^2}\ll T^{\frac{3}{2}-\delta}$$
for some $\delta>0$, then
$$\frac{1}{2\pi}\int_{-\infty}^{\infty}\left|1-\zeta V_N\left(\frac{1}{2}+it\right)\right|^2\frac{dt}{\frac{1}{4}+t^2}\sim\frac{2+\gamma-\log4\pi}{\log N}$$
for 
\[
V_N(s):=\sum_{n=1}^N\left(1-\frac{\log n}{\log N}\right)\frac{\mu(n)}{n^s}\:.\tag{1.2}
\]
Moreover, it follows from work of Burnol \cite{burnol} that among all Dirichlet polynomials $D_N(s)$ the infimum in (1.1) is assumed for $D_N(s)=V_N(s)$. It thus is of interest to obtain an unconditional estimate for the integral in (1.1).\\
If we expand the square in (1.1) we obtain
\begin{align*}
d_N^2&=\inf_{D_N}\Bigg(\int_{-\infty}^\infty\left(1-\zeta\left(\frac{1}{2}+it\right)D_N\left(\frac{1}{2}+it\right)-\zeta\left(\frac{1}{2}-it\right)D_N\left(\frac{1}{2}-it\right)\right)\frac{dt}{\frac{1}{4}+t^2}\\
&\ \ \ +\int_{-\infty}^\infty\left|\zeta\left(\frac{1}{2}+it\right)\right|^2\left|D_N\left(\frac{1}{2}+it\right)\right|^2\frac{dt}{\frac{1}{4}+t^2}\Bigg)\:.
\end{align*}
The last integral evaluates as 
$$\sum_{1\leq h,k\leq N} a_h\bar{a_k} h^{-1/2}k^{-1/2} \int_{-\infty}^\infty\left|\zeta\left(\frac{1}{2}+it\right)\right|^2\left(\frac{h}{k}\right)^{it}\frac{dt}{\frac{1}{4}+t^2}\:.$$
We have
\begin{align*}
b_{h,k}&:=\frac{1}{2\pi\sqrt{hk}}\int_{-\infty}^\infty\left|\zeta\left(\frac{1}{2}+it\right)\right|^2\left(\frac{h}{k}\right)^{it}\frac{dt}{\frac{1}{4}+t^2}\\
&=\frac{\log2\pi-\gamma}{2}\left(\frac{1}{h}+\frac{1}{k}\right)+\frac{k-h}{2hk}\log\frac{h}{k}-\frac{\pi}{2hk}\left(V\left(\frac{h}{k}\right)+V\left(\frac{k}{h}\right)\right)\:,\tag{1.3}
\end{align*}
where
$$V\left(\frac{h}{k}\right):=\sum_{m=1}^{k-1}\left\{\frac{mh}{k}\right\}\cot\left(\frac{\pi m h}{k}\right)$$
is Vasyunin's sum (see \cite{VAS}).\\
It can be shown that 
\[
V\left(\frac{h}{k}\right)=-c_0\left(\frac{\bar{h}}{k}\right),\tag{1.4}
\]
where $h\bar{h}\equiv 1(\bmod k)$, with the cotangent sum
$$c_0\left(\frac{h}{k}\right):=\sum_{l=1}^{k-1}\frac{l}{k}\cot\left(\frac{\pi h l}{k}\right)\:,$$
where $h,k\in\mathbb{N}$, $k\geq 2$, $1\leq h\leq k$, $(h,k)=1$ (see \cite{BEC}, \cite{mr}).

Ishibashi \cite{ISH} observed that $c_0$ is related to the value at $s=0$ or $s=1$ 
of the Estermann zeta function by the functional equation of the \textit{imaginary part}. Namely
$$c_0\left(\frac{a}{q}\right)=\frac{1}{2}D_{sin}\left(0,\frac{a}{q}\right)=2q\pi^{-2}D_{\sin}\left(1,\frac{\bar{a}}{q}\right)\:,$$
where for $x\in\mathbb{R}$, $Re(s)>1$, we have:
$$D_{\sin}(s,x):=\sum_{n=1}^\infty\frac{d(n)\sin(2\pi nx)}{n^s}$$
and $a\bar{a}\equiv 1(\bmod q)$.\\
If $x\in\mathbb{R}\setminus\mathbb{Q}$, then de la Bret\`eche and Tenenbaum \cite{bre} showed that the convergence of the above series at $s=1$ is equivalent to the convergence of 
$$\sum_{n\geq 1}(-1)^n\log\frac{v_{n+1}}{v_n}\:,$$
where $u_n/v_n$ denotes the $n$-th partial quotient of $x$.\\
The irrational numbers for which this sum converges are called \textbf{Wilton numbers}.

A basic ingredient in the papers \cite{mr2}, \cite{mr5} have been the representations of 
Balazard, Martin in their papers \cite{balaz1}, \cite{balaz2}, of the function 
$$g(x):=\sum_{l\geq 1}\frac{1-2\{lx\}}{l}\:,$$
involving the Gauss transform from the theory of continued fractions as well as from the paper \cite{Marmi}, by Marmi, Moussa and Yoccoz.\\
These concepts and results also play an important role in the present paper. We shall represent them in the next section.\\
We now describe our result and its relation to the Nyman-Beurling criterion:\\
From (1.2) and (1.3) we obtain
\begin{align*}
&\int_{-\infty}^\infty\left|\zeta\left(\frac{1}{2}+it\right)\right|^2\left|D_N\left(\frac{1}{2}+it\right)\right|^2\frac{dt}{\frac{1}{4}+t^2}\\
&\ \ =\sum_{1\leq h,k\leq N}\mu(h)\mu(k)\left(1-\frac{\log h}{\log N}\right)\left(1-\frac{\log k}{\log N}\right)\\
&\ \ \ \ \ \ \times\left[ \frac{\log 2\pi-\gamma}{2}\left(\frac{1}{h}+\frac{1}{k}\right)+\frac{k-h}{2h\pi}\log\frac{h}{k}-\frac{\pi}{2hk}\left(V\left(\frac{h}{k}\right)+V\left(\frac{k}{h}\right)\right) \right]\:.
\end{align*}
We investigate a partial sum, in which $k$ is kept fixed, namely (up to a constant depending only on $h$):
$$\sum_{h\in I}\mu(h)\left(1-\frac{\log h}{\log N}\right)\frac{1}{h}V\left(\frac{h}{k}\right)\:,$$
which coincides with 
$$\sum_{h\in I}\mu(h)\left(1-\frac{\log h}{\log N}\right)g\left(\frac{h}{k}\right)$$
($I$ is a suitable interval).\\
We prove the following result:
\begin{theorem}\label{main}
Let $0<\delta<D/2$, $k^{2\delta}\leq B\leq k^D$, where $k^{-\delta}\leq \eta\leq 1$. Then there
is a positive constant $\beta$ depending only on $\delta$ and $D$, such that
$$\sum_{Bk\leq h< (1+\eta)Bk}\mu(h)g\left(\frac{h}{k}\right)=O\left((\eta Bk)^{1-\beta}\right)\:.$$
\end{theorem}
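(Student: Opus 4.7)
\medskip
\noindent\textbf{Proof plan.}
The plan is to exploit the Fourier-type expansion
\[
g\!\left(\frac{h}{k}\right) = \frac{2}{\pi}\sum_{n=1}^{\infty}\frac{d(n)\sin(2\pi nh/k)}{n},
\]
which follows formally from $1-2\{x\}=\frac{2}{\pi}\sum_{l\geq 1}\sin(2\pi lx)/l$ after swapping summations, and which I would justify as a truncated identity with controlled tail using the Balazard--Martin / Marmi--Moussa--Yoccoz decomposition of $g$ via the Gauss transform alluded to in the introduction. After interchanging summations, the problem reduces to bounding the short-interval twisted M\"obius sum
\[
T(q,m;I) = \sum_{h\in I}\mu(h)\,e\!\left(\frac{mh}{q}\right), \qquad I=\bigl[Bk,(1+\eta)Bk\bigr),
\]
where, writing $d=(n,k)$, one has $q=k/d$, $m=n/d$ and $(m,q)=1$. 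The length $L=\eta Bk$ of $I$ satisfies $L\geq k^{1+\delta}$ by the hypotheses $B\geq k^{2\delta}$ and $\eta\geq k^{-\delta}$, so every relevant $q\mid k$ obeys $q\leq k\leq L^{1/(1+\delta)}$.

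First I would truncate the Fourier series at a small power $N_0$ of $L$, controlling the tail via the trivial bound $|g(h/k)|\ll \log k$ together with Abel summation and $\sum_{n\leq x}d(n)\ll x\log x$. Second, for the main range $n\leq N_0$, I would group by $d=(n,k)$; since the divisor weight contributes only $L^{\varepsilon}$, the whole estimate reduces to a uniform power-saving bound
\[
T(q,m;I)\ll L^{1-\beta'} \qquad\text{for all } q\mid k,\ (m,q)=1,
\]
with $\beta'>0$ depending only on $\delta$ and $D$.

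Such a bound I would obtain by applying Vaughan's identity to $\mu$, producing type-I and type-II bilinear sums whose inner variables are twisted by $e(\,\cdot\,m/q)$. For type~I, Weyl differencing against the rational phase of denominator $q\leq k\ll L^{1/(1+\delta)}$ gives cancellation; for type~II, Cauchy--Schwarz combined with the standard bound on Ramanujan/Kloosterman-type sums does the same. The main obstacle will be the \emph{uniformity} of $\beta'$ over the entire range $k^{2\delta}\leq B\leq k^D$ and $k^{-\delta}\leq\eta\leq 1$: when both $B$ and $\eta$ are near the lower end, $L$ barely exceeds $k$, so the type-I / type-II splitting has very little room, and the final exponent $\beta$ must be tracked carefully as a function only of $\delta$ and $D$. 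Rigorously justifying the Fourier-series interchange via the Gauss-map representations, and producing a clean bilinear bound that is uniform in the parameter regime, will together constitute the bulk of the work.
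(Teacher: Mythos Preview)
Your plan is workable in principle but follows a genuinely different route from the paper. The paper applies Vaughan's identity \emph{first} to $\mu(h)$, splitting the sum into $\Sigma_1+\Sigma_2+\Sigma_3$ and then $\Sigma_1=\Sigma_{1,1}+\Sigma_{1,2}$; only at the very end, inside the type-II piece $\Sigma_{1,2}$ and \emph{after} a Cauchy--Schwarz step over the long variable $\gamma$, does it invoke the Fourier expansion of $g$, and then only in an $L^2$/Parseval sense. For the type-I pieces $\Sigma_{1,1}$ and $\Sigma_3$ the paper does not pass to exponential sums at all: it instead exploits the antisymmetry $g(C+1-x)=-g(x)$ to pair the values of $g$ across each full period $[Ck,(C+1)k)$, and then bounds the resulting small differences $g(x^{(0)})-g(x^{(1)})$ via the continued-fraction cell decomposition (Lemmas~\ref{lem216}--\ref{lem219}) together with the Marmi--Moussa--Yoccoz contraction estimate for $T^{s_0+1}\mathcal{W}$. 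Your plan inverts the order---Fourier first, Vaughan second---and thereby reduces everything to the standard twisted M\"obius sums $T(q,m;I)$, trading the bespoke continued-fraction argument for off-the-shelf type-I/type-II exponential-sum bounds. That is a legitimate alternative; what it buys is a more familiar toolkit, while what the paper's route buys is that the antisymmetry of $g$ disposes of the type-I part almost for free, with no minor-arc analysis.

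Two places in your outline need tightening. First, the pointwise truncation of $\sum_n d(n)\sin(2\pi nh/k)/n$ at a ``small power $N_0$ of $L$'': via partial summation and the elementary bound $\sum_{n\le N}d(n)\sin(2\pi nh/k)\ll(N^{1/2}+k)N^{\varepsilon}$, the tail is of size $\asymp k/N_0$, so you must take $N_0\gtrsim k$, which in the extreme regime $L=k^{1+\delta}$ is $L^{1/(1+\delta)}$ rather than a small power. This is precisely why the paper introduces the Fourier series only after Cauchy--Schwarz, where Parseval handles the tail in mean square rather than pointwise. Second, for small $q$ the Vaughan bound on $T(q,m;I)$ is weak by itself; it must be balanced against the fact that such $q$ force $n$ to be a multiple of $k/q$, so the Fourier weight $d(n)/n\ll q k^{-1+\varepsilon}$ compensates. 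You mention the divisor weight only as an $L^{\varepsilon}$ loss, but here it is doing real work. Neither point is fatal, but both are exactly where the uniformity in $\delta,D$ that you flag as the ``main obstacle'' actually bites.
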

\textit{Remark.} This is one of the few cases of a remarkably sharp estimate of a sum 
containing the M\"obius function that is better than the trivial estimate by a factor, which is a
positive power of the number of the terms.
\begin{cor}
Let $\gamma>0$, $\delta>0$, $\epsilon>0$ fixed, $k\geq N^\delta$. Then there
is $\kappa>0$ depending only on $\gamma, \delta, \epsilon$, such that
$$\sum_{1\leq h\leq N}\frac{\mu(h)}{h}\left(1-\frac{\log h}{\log N}\right)V\left(\frac{h}{k}\right)=\sum_{1\leq h\leq k'}\frac{\mu(h)}{h}\left(1-\frac{\log h}{\log N}\right)V\left(\frac{h}{k}\right)+O(k^{1-\kappa})\:,$$
where $k'=k^{1+\epsilon}$.
\end{cor}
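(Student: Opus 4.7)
The plan is to bound the tail
\[
\mathcal{T}:=\sum_{k'<h\leq N}\frac{\mu(h)}{h}\Bigl(1-\frac{\log h}{\log N}\Bigr)V\Bigl(\frac{h}{k}\Bigr)
\]
by first reducing to a $g$-sum and then applying Theorem \ref{main} dyadically. As noted in the paper immediately before the statement of Theorem \ref{main}, the summand $\tfrac{1}{h}V(h/k)$ coincides with $g(h/k)$ in this context, so
\[
\mathcal{T}=\sum_{k'<h\leq N}\mu(h)\Bigl(1-\frac{\log h}{\log N}\Bigr)g\Bigl(\frac{h}{k}\Bigr).
\]

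First I partition $(k',N]=(k^{1+\epsilon},N]$ into $O(\log N)$ dyadic blocks $I_j=(B_jk,2B_jk]$ with $B_j=2^j k^\epsilon$, $j=0,\ldots,J$. Since $k\geq N^\delta$ gives $N\leq k^{1/\delta}$, each $B_j$ lies in $[k^\epsilon,k^{1/\delta-1}]$. Taking the Theorem \ref{main} parameters to be $\delta_1:=\epsilon/2$, $D:=1/\delta$ and $\eta:=1$, the hypothesis $k^{2\delta_1}\leq B_j\leq k^D$ is satisfied, producing a positive constant $\beta=\beta(\delta,\epsilon)$ and the block bound
\[
\sum_{h\in I_j}\mu(h)g(h/k)=O\bigl((B_jk)^{1-\beta}\bigr).
\]
Applying Theorem \ref{main} to sub-blocks $(B_jk,B_jk(1+\eta)]$ with $\eta\in[k^{-\epsilon/2},1]$, and a trivial estimate for the small residual range, gives the uniform bound $|A(x)|\ll x^{1-\beta}$ for $A(x):=\sum_{k'<h\leq x}\mu(h)g(h/k)$ on $x\in(k',N]$.

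Next, to absorb the smooth weight $w(h):=1-\log h/\log N$ I apply Abel summation; since $w(N)=0$,
\[
\mathcal{T}=\frac{1}{\log N}\int_{k'}^{N}\frac{A(x)}{x}\,dx=O\!\left(\frac{N^{1-\beta}}{(1-\beta)\log N}\right).
\]
Combined with $N\leq k^{1/\delta}$ this yields $\mathcal{T}=O\bigl(k^{(1-\beta)/\delta}/\log k\bigr)$, so setting $\kappa:=1-(1-\beta)/\delta$ (depending only on $\gamma,\delta,\epsilon$) gives the required bound $\mathcal{T}=O(k^{1-\kappa})$.

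The main obstacle is ensuring $\kappa>0$, equivalently $\beta>1-\delta$. Theorem \ref{main} only asserts $\beta>0$, so this step requires extracting from the proof of Theorem \ref{main} a quantitative lower bound showing that $\beta(\delta_1,D)$ exceeds $1-\delta$ with our parameter choice. Should this be not immediate, one may instead use a non-uniform partition of $(k',N]$ whose blocks become denser as $h\to N$, thereby exploiting the vanishing of the weight $w$ at the upper endpoint to recover the missing savings; alternatively, iterating Theorem \ref{main} on overlapping scales and summing with the decay of $w(h)$ as weight, via a double Abel summation, provides the same effect.
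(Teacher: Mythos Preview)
Your approach—Abel summation on top of dyadic block estimates from Theorem \ref{main}—is exactly the ``integration by parts'' that constitutes the paper's entire one-line proof, and your argument is correct up through the bound $\mathcal{T}=O(N^{1-\beta}/\log N)$.

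The obstacle you isolate in the final paragraph is, however, genuine, and the paper's proof does not address it either. Converting $O(N^{1-\beta})$ into $O(k^{1-\kappa})$ via $N\le k^{1/\delta}$ forces $\beta>1-\delta$, and Theorem \ref{main} gives no such quantitative control: its $\beta$ is produced as a cascade of ``sufficiently small'' constants $\delta_0,\delta_1,\ldots$ and there is no mechanism in that proof making $\beta$ large when the upper exponent $D=1/\delta$ is large. Your proposed remedies do not close the gap. Abel summation already extracts all the saving that the vanishing of $w(h)=1-\log h/\log N$ at $h=N$ can provide: the identity $\mathcal{T}=(\log N)^{-1}\int_{k'}^{N}A(x)x^{-1}\,dx$ is precisely the continuous limit of any ``partition that gets denser near $N$'', and a second integration by parts only redistributes the integral $\int_{k'}^{N}x^{-\beta}\,dx\asymp N^{1-\beta}$ without reducing its order. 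With Theorem \ref{main} as the sole input one obtains $\mathcal{T}=O(N^{1-\kappa})$ for some $\kappa>0$, but not $O(k^{1-\kappa})$ uniformly in small $\delta$. The unused hypothesis $\gamma>0$ in the corollary's statement suggests a missing side condition; as written, the stated error term does not seem to follow from Theorem \ref{main} by integration by parts alone.
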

\begin{proof}
This follows from Theorem \ref{main} by integration by parts.
\end{proof}
\section{Continued Fractions}
In this section we provide some basic facts related to the theory of continued fractions. We also recall facts and results from the paper \cite{balaz2}.
\begin{definition}\label{def21}
Let $X:=[0,1]\setminus\mathbb{Q}$. For $x\in(0,1)$ we set $\alpha(x):=\{1/x\}$. For $x\in X$ we define recursively
$$\alpha_0(x):=x,\ \alpha_l(x):=\alpha(\alpha_{l-1}(x)),\ \ \text{for $l\in\mathbb{N}$}\:.$$
This definition is also valid for $x\in\mathbb{Q}$ and $l\in\mathbb{N}$, whenever $\alpha_{l-1}(x)\neq 0$. We set
$$a_l(x):=\left[\frac{1}{\alpha_{l-1}(x)}\right]\:.$$
\end{definition}
We have:
\begin{lemma}\label{lem22}
For $x\in X$,
$$x=[0;a_1(x),\ldots, a_l(x),\ldots]$$
the continued fraction expansion of $x$.\\
For $x\in(0,1)\cap \mathbb{Q}$ we have 
$$x=[0;a_1(x),\ldots, a_L(x)]\:,$$
where $L$ is the last $l$, for which $\alpha_{l-1}\neq 0$.\\
We define the partial quotient of $p_l(x), q_l(x)$ by 
$$\frac{p_l(x)}{q_l(x)}:=[0;a_1(x),\ldots, a_l(x)],\ where\ (p_l(x), q_l(x))=1\:.$$
We have
$$p_{l+1}=a_{l+1}p_l+p_{l-1}$$
$$q_{l+1}=a_{l+1}q_l+q_{l-1}\:.$$
\end{lemma}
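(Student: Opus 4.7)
The plan is to prove Lemma \ref{lem22} in three stages: the continued fraction expansion itself, the termination in the rational case, and the convergent recurrence.

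First, I would establish the finite identity $x=[0;a_1(x),\ldots,a_l(x),\alpha_l(x)]$ by induction on $l$. The base case $l=0$ is just $x=\alpha_0(x)$, which holds by definition. For the inductive step, the defining relation $\alpha_l(x)=\{1/\alpha_{l-1}(x)\}=1/\alpha_{l-1}(x)-a_l(x)$ yields $\alpha_{l-1}(x)=1/(a_l(x)+\alpha_l(x))$, and substituting this into the last entry of $[0;a_1(x),\ldots,a_{l-1}(x),\alpha_{l-1}(x)]$ produces $[0;a_1(x),\ldots,a_l(x),\alpha_l(x)]$. For the rational case $x\in(0,1)\cap\mathbb{Q}$, I note that if $\alpha_{l-1}(x)=u/v$ in lowest terms with $v\ge 1$, then $1/\alpha_{l-1}(x)=v/u$ and $\alpha_l(x)=\{v/u\}$ has denominator strictly less than $v$. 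Hence the denominators of the $\alpha_l(x)$ strictly decrease until some $\alpha_L(x)=0$, at which point $[0;a_1(x),\ldots,a_L(x)]$ is the exact finite expansion.

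Next, I would prove the recurrence $p_{l+1}=a_{l+1}p_l+p_{l-1}$ and $q_{l+1}=a_{l+1}q_l+q_{l-1}$ by induction on $l$, after extending the definition with the standard convention $p_{-1}=1$, $q_{-1}=0$, $p_0=0$, $q_0=1$. The base cases $l=0,1$ reduce to the direct evaluations $p_1/q_1=1/a_1$ and $p_2/q_2=a_2/(a_1a_2+1)$, which agree with the recurrence. For the inductive step, I would use the formal identity
\[
[0;b_1,\ldots,b_l,b_{l+1}]=[0;b_1,\ldots,b_{l-1},b_l+1/b_{l+1}],
\]
valid when the entries are treated as positive real variables. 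Applying the inductive hypothesis to the right-hand side with modified last entry $a_l+1/a_{l+1}$ gives
\[
\frac{(a_l+1/a_{l+1})p_{l-1}+p_{l-2}}{(a_l+1/a_{l+1})q_{l-1}+q_{l-2}}
=\frac{a_{l+1}(a_lp_{l-1}+p_{l-2})+p_{l-1}}{a_{l+1}(a_lq_{l-1}+q_{l-2})+q_{l-1}}
=\frac{a_{l+1}p_l+p_{l-1}}{a_{l+1}q_l+q_{l-1}},
\]
after clearing the denominator $a_{l+1}$ and applying the inductive formulas for $p_l,q_l$.

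Finally, I must verify that the resulting fraction is in lowest terms, so that the recurrence really gives $p_{l+1},q_{l+1}$ and not merely some unreduced representative. For this I would prove in parallel the standard determinant identity $p_{l+1}q_l-p_lq_{l+1}=(-1)^l$, again by induction using the recurrence; it forces $\gcd(p_{l+1},q_{l+1})=1$. As a byproduct, this identity gives $|x-p_l/q_l|<1/(q_lq_{l+1})$, and since $q_l$ satisfies $q_l\ge q_{l-1}+q_{l-2}$, hence $q_l\to\infty$, the convergents tend to $x$, justifying the infinite expansion in the irrational case. The main obstacle is mostly organizational: one must handle the rational termination carefully and keep the determinant identity synchronized with the recurrence to guarantee the coprimality required by the definition of $p_l(x),q_l(x)$.
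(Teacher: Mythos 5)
Your argument is correct and reproduces the standard textbook proof of these classical facts. The paper itself, however, does not prove Lemma~\ref{lem22}: its ``proof'' consists solely of the citation ``(cf.~\cite{hens}, p.~7.)'' to Hensley's \emph{Continued Fractions}. Since the lemma collects well-known identities from the elementary theory of continued fractions, that citation is entirely appropriate for the paper's purposes; your proposal spells out what the reference contains. What the paper's approach buys is brevity; what yours buys is self-containment, at the cost of rederiving material that any reader consulting the paper would be expected to know.

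One small point of rigor worth tightening. When you assert the finite identity $x=[0;a_1(x),\ldots,a_l(x),\alpha_l(x)]$ with base case $l=0$ being ``$x=\alpha_0(x)$,'' you are implicitly adopting the convention in which the last (real) entry is \emph{added} to the preceding partial quotient, that is, $[0;a_1,\ldots,a_l,y]=[0;a_1,\ldots,a_{l-1},a_l+y]$ for $y\in[0,1)$. Under the more common convention, in which a final real entry $y$ is reciprocated into a new level, the base case $[0;\alpha_0(x)]$ would equal $1/x$, not $x$, and the inductive substitution of $\alpha_{l-1}=1/(a_l+\alpha_l)$ into the last slot would produce $a_{l-1}+a_l+\alpha_l$ at the deepest level rather than the intended $a_l+1/\alpha_l$. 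The cleanest fix is either to state the convention explicitly or to write the identity as $x=[0;a_1,\ldots,a_{l-1},a_l+\alpha_l]$. Apart from that notational ambiguity, your structure is exactly right: the descent on denominators for rational termination, the formal induction for the convergent recurrence after fixing $p_{-1}=1$, $q_{-1}=0$, $p_0=0$, $q_0=1$, and the synchronized determinant identity $p_{l+1}q_l-p_lq_{l+1}=(-1)^l$ to secure coprimality (without which the recurrence would only produce \emph{some} representative of $p_{l+1}/q_{l+1}$, not the reduced one demanded by the definition). You have correctly identified and handled the one genuine subtlety in the argument.
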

\begin{proof}
(cf. \cite{hens}, p. 7.)
\end{proof}
\begin{definition}\label{def23}
For $r\in(0,1)\cap\mathbb{Q}$ let
$$r=[0;a_1(r),\ldots, a_L(r)]\:.$$
Then we call $L$ the \textbf{depth} of $r$. 
\end{definition}
\begin{definition}\label{def24}
Let $x\in X$. Then for $l\in\mathbb{N}_0:=\mathbb{N}\cup\{0\}$, we set:
$$\beta_l(x):=\alpha_0(x)\alpha_1(x)\cdots \alpha_l(x)$$
(by convention $\beta_{-1}=1$) and
$$\gamma_l(x):=\beta_{l-1}(x)\log\frac{1}{\alpha_l(x)},\ \text{where}\ l\geq 0,$$
so that $\gamma_0(x)=\log(1/x).$\\
Let $r\in(0,1)\cap\mathbb{Q}$ be a rational number of depth $L$. Then we set:
$$\beta_l(r):=\alpha_0(r)\alpha_1(r)\ldots \alpha_l(r)\:,\ \ \text{for $0\leq l\leq L$ and}$$
$$\gamma_l(r):=\beta_{l-1}(r)\log\frac{1}{\alpha_l(r)}\:,\ \ \text{for $0\leq l\leq L$.}$$
For $x\in X$ we define Wilton's function $\mathcal{W}(x)$ by 
$$\mathcal{W}(x):=\sum_{l\geq 0}(-1)^l\gamma_l(x)\:,\ \ \text{for all $x\in X$,}$$
for which the series is convergent.
\end{definition}
\begin{definition}\label{def25}(definitions from Sec. 4.1 of \cite{balaz2})$ $\\
For $\lambda\geq 0$ let 
$$A(\lambda):=\int_0^\infty\{t\}\{\lambda t\}\frac{dt}{t^2}\:.$$
For $x>0$ let
$$F(x):=\frac{x+1}{2}A(1)-A(x)-\frac{x}{2}\log x\:.$$
For $x\in X$ let
$$G(x):=\sum_{j\geq 0}(-1)^j\beta_{j-1}F(\alpha_j(x))\:.$$
For a rational number $r$ of depth $L$ let
$$G(r):=\sum_{j\leq L}(-1)^j \beta_{j-1}(r)F(\alpha_j(r))\:.$$
\end{definition}
\begin{lemma}\label{lem26}
We have
$$\sup_{\lambda\geq 1}|A(\lambda+h)-A(\lambda)|\leq \frac{1}{2}h\log\left(\frac{1}{h}\right)+O(h)\:,\ \ (0<h\leq 1)\:.$$
\end{lemma}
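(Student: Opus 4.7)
My plan is to start from
$$A(\lambda+h) - A(\lambda) = \int_0^\infty \{t\}\bigl[\{(\lambda+h)t\} - \{\lambda t\}\bigr]\frac{dt}{t^2}$$
and split the integration at $t = 1/h$. For $t < 1/h$ one has $ht < 1$, and writing $ht + \{\lambda t\}$ in its integer and fractional parts shows that the bracket equals $ht$ if $\{\lambda t\} + ht < 1$, and $ht - 1$ otherwise. Setting $E := \{t \in (0, 1/h] : \{\lambda t\} + ht \geq 1\}$, the contribution over $(0, 1/h]$ becomes
$$h\int_0^{1/h}\frac{\{t\}}{t}\,dt \;-\; \int_E \frac{\{t\}}{t^2}\,dt,$$
while the tail over $[1/h, \infty)$ is $O(h)$ via the trivial bound $|\{(\lambda + h)t\} - \{\lambda t\}| \leq 1$ together with $\int_{1/h}^\infty dt/t^2 = h$.

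For the first integral I would use the classical evaluations $\int_0^1 \{t\}/t\,dt = 1$ and $\int_1^T \{t\}/t\,dt = \frac{1}{2}\log T + O(1)$, the latter obtained by expanding each piece $\int_n^{n+1}(t-n)/t\,dt = \frac{1}{2n} + O(n^{-2})$ and summing. This gives $h\int_0^{1/h}\{t\}/t\,dt = \frac{h}{2}\log(1/h) + O(h)$. For the non-negative subtraction $\int_E \{t\}/t^2\,dt$, I slice $(0, 1/h]$ into the fundamental intervals $I_n = [n/\lambda, (n+1)/\lambda]$, on each of which $\{\lambda t\} = \lambda t - n$; the condition defining $E$ becomes $(\lambda+h)t \geq n + 1$, so $E \cap I_n = [(n+1)/(\lambda+h), (n+1)/\lambda]$. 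For those $I_n$ lying inside $(0, 1]$ (at most $\lceil\lambda\rceil$ indices) I use $\{t\} = t$, whence each piece contributes $\log(1 + h/\lambda) \leq h/\lambda$, summing to $O(h)$. For $I_n$ lying inside $(1, 1/h]$ (roughly $\lambda \leq n \leq \lambda/h$) I use $\{t\} \leq 1$, so each piece contributes at most $h/(n+1)$, and summing yields $h\log(1/h) + O(h)$.

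Putting the three pieces together,
$$A(\lambda+h) - A(\lambda) = \frac{h}{2}\log(1/h) - \int_E \frac{\{t\}}{t^2}\,dt + O(h), \qquad 0 \leq \int_E \frac{\{t\}}{t^2}\,dt \leq h\log(1/h) + O(h),$$
which places $A(\lambda+h) - A(\lambda)$ in $[-\frac{h}{2}\log(1/h) + O(h),\, \frac{h}{2}\log(1/h) + O(h)]$ and yields the claimed bound on $|A(\lambda+h) - A(\lambda)|$. I do not foresee a deep obstacle; the only point that needs care is the uniformity of the $O$-constants in $\lambda \geq 1$. This is automatic, because the two relevant sums are controlled respectively by $\lceil\lambda\rceil \cdot (h/\lambda) = O(h)$ and by $h\bigl(\log(\lambda/h) - \log\lambda\bigr) = h\log(1/h)$, both intrinsically independent of $\lambda$, while the at most $O(1)$ boundary intervals where $I_n$ only partially lies inside $(0,1]$ or $(1, 1/h]$ contribute at worst $O(h)$ extra.
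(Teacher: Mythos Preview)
Your argument is correct. The paper does not actually prove this lemma: its ``proof'' consists solely of the citation ``This is Proposition 33 of \cite{balaz2}''. Your self-contained computation---splitting at $t=1/h$, decomposing the bracket $\{(\lambda+h)t\}-\{\lambda t\}$ into $ht$ minus the indicator of the carry set $E$, evaluating $h\int_0^{1/h}\{t\}/t\,dt=\tfrac{h}{2}\log(1/h)+O(h)$, and bounding $\int_E\{t\}/t^2\,dt$ by slicing into the intervals $I_n=[n/\lambda,(n+1)/\lambda]$---is a clean direct proof and the uniformity in $\lambda\ge1$ is handled exactly as you note. There is nothing further to compare on the paper's side.
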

\begin{proof}
This is Proposition 33 of \cite{balaz2}.
\end{proof}
\begin{definition}\label{def27}
Let 
\begin{eqnarray}
\delta(x):=\left\{ 
  \begin{array}{l l}
   0\: & \quad \text{, if $x\in X$}\vspace{2mm}\\ 
    \frac{(-1)^{L+1}A(1)}{2q}\: & \quad \text{, if $x=p/q\in[0,1]$, $(p,q)=1$, $x$ of depth $L$}\:.\\
  \end{array} \right.
\nonumber
\end{eqnarray}
\end{definition}
\begin{lemma}\label{lem27}
The series $g(x)$ and $\mathcal{W}(x)$ converge for the same values $x\in[0,1]$ and we have 
$$g(x)=\mathcal{W}(x)-2G(x)-2\delta(x)$$
in each point of convergence. 
\end{lemma}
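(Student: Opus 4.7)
\emph{Proof plan.} The strategy is to establish a Gauss-map functional equation for $g$ paralleling the trivial ones satisfied by $\mathcal{W}$, $G$ and $\delta$, and then iterate until the orbit either terminates (rational case) or its contribution vanishes geometrically (irrational case).

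\textbf{Step 1 (easy functional equations).} Isolating the index-zero summand in Definitions \ref{def24} and \ref{def25}, and using the identities $\alpha_j(\alpha(x))=\alpha_{j+1}(x)$ and $\beta_j(x)=x\,\beta_{j-1}(\alpha(x))$ that follow at once from Definition \ref{def21}, one reads off
$$\mathcal{W}(x)=\log(1/x)-x\,\mathcal{W}(\alpha(x)),\qquad G(x)=F(x)-x\,G(\alpha(x)).$$
Using that $\alpha$ sends a rational of depth $L$ and denominator $q$ to one of depth $L-1$ whose denominator is the numerator of the original, Definition \ref{def27} also yields $\delta(x)=-x\,\delta(\alpha(x))$ at rationals (and trivially $\delta\equiv 0$ on $X$).

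\textbf{Step 2 (analytic heart).} The main input is the inhomogeneous three-term relation
$$g(x)=\log(1/x)-2F(x)-x\,g(\alpha(x))-2\delta(x)+2x\,\delta(\alpha(x)) \quad(\ast).$$
To prove $(\ast)$, I would rewrite $\sum_{1\le l\le N}(1-2\{lx\})/l$ by Abel summation and compare it with a Riemann sum for the integral defining $A(\lambda)$: the substitution $t\mapsto 1/(xt)$ in $A(x)=\int_0^\infty\{t\}\{xt\}\,dt/t^2$ produces the tail $-x\,g(\alpha(x))$, while Lemma \ref{lem26} controls the discrete-vs-continuous discrepancy and collects the boundary contribution $\log(1/x)-2F(x)$. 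The $\delta$-terms arise at rationals exactly from the failure of equidistribution in the partial sums at the endpoints. This is essentially the content of Propositions 34--37 of Balazard--Martin \cite{balaz2}.

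\textbf{Step 3 (iteration and the identity).} Define $\phi(x):=g(x)-\mathcal{W}(x)+2G(x)+2\delta(x)$. Combining Step 1 with $(\ast)$ of Step 2, all inhomogeneous terms cancel and we obtain the purely multiplicative recursion $\phi(x)=-x\,\phi(\alpha(x))$, which iterates to
$$\phi(x)=(-1)^n\beta_{n-1}(x)\,\phi(\alpha_n(x)).$$
For $x\in X$ the classical bound $\beta_{n-1}(x)\le 1/q_n(x)$ together with the exponential growth of $q_n$ (Lemma \ref{lem22}) forces $\beta_{n-1}(x)\to 0$ geometrically; at any $x$ of convergence the iterates $\phi(\alpha_n(x))$ remain bounded via the convergent tails of the four defining series, so $\phi(x)=0$. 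For rational $x$ of depth $L$ the recursion terminates at $\alpha_L(x)=0$, and the very choice of $\delta$ in Definition \ref{def27} is precisely calibrated to make the boundary value vanish. The same telescoping compares the partial sums of $g$ and of $\mathcal{W}-2G-2\delta$ up to $O(\beta_{n-1}(x))$, yielding the claimed equivalence of domains of convergence.

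\textbf{Main obstacle.} The single delicate step is Step 2: producing $(\ast)$ with its correct $F$-correction and $\delta$-boundary adjustment requires careful bookkeeping of the Abel-summation error at the endpoints, and is where the specific shape of $F$ in Definition \ref{def25} is dictated. Once $(\ast)$ is in hand, the remainder of the proof is a formal telescoping driven by the exponential contraction of the Gauss map along its generic orbits.
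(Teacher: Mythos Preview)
The paper does not give an independent proof of this lemma; its entire argument is the one-line citation ``This is Proposition 28 of \cite{balaz2}.'' Your sketch---establish the Gauss-map functional equations for $\mathcal{W}$, $G$, $\delta$ and (the hard part) for $g$, then telescope the difference $\phi=g-\mathcal{W}+2G+2\delta$ along the orbit using the geometric decay of $\beta_{n-1}$---is precisely the route Balazard--Martin take in that reference, and you yourself attribute Step~2 to their Propositions~34--37; so your approach coincides with the paper's (cited) proof.
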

\begin{proof}
This is Proposition 28 of \cite{balaz2}.
\end{proof}
\begin{definition}\label{def28}
For $n\in\mathbb{N}$, $x\in X$ or $x$ a rational number with depth $\geq n$, 
we define 
$$\mathcal{L}(x, n):=\sum_{\nu=0}^n(-1)^\nu(T^\nu l)(x)\:,$$
where $l(x):=\log(1/x)$ and the operator $T\::\: L^p\rightarrow L^p$ is defined
by $$Tf(x):=xf(\alpha(x))\:.$$
\end{definition}
\begin{lemma}\label{lem29}
Let $x\in X$ be a Wilton number or a rational number. Then we have:
$$\mathcal{W}(x)=l(x)-x\mathcal{W}(\alpha(x))\:.$$
\end{lemma}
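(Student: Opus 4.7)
The plan is to verify the recursion by a direct term-by-term computation exploiting the shift property of the Gauss map. First, I would isolate the $l=0$ term of $\mathcal{W}(x)$: since $\beta_{-1}(x)=1$ by convention and $\alpha_0(x)=x$, Definition~\ref{def24} gives $\gamma_0(x)=\log(1/x)=l(x)$, so the identity reduces to showing
\[
x\,\mathcal{W}(\alpha(x))=-\sum_{l\geq 1}(-1)^l\gamma_l(x).
\]

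The key algebraic input is the shift relation for the iterates of $\alpha$. Writing $y=\alpha(x)$, one has $\alpha_j(y)=\alpha_{j+1}(x)$ for every admissible $j$, and hence
\[
\beta_l(y)=\prod_{j=0}^{l}\alpha_j(y)=\prod_{j=1}^{l+1}\alpha_j(x)=\frac{\beta_{l+1}(x)}{x}\qquad(l\geq 0).
\]
Substituting this into the definition of $\gamma_l$ yields, for every $l\geq 0$,
\[
x\,\gamma_l(y)=x\,\beta_{l-1}(y)\log\frac{1}{\alpha_l(y)}=\beta_l(x)\log\frac{1}{\alpha_{l+1}(x)}=\gamma_{l+1}(x),
\]
where for $l=0$ one uses the convention $\beta_{-1}(y)=1$. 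Multiplying the series $\mathcal{W}(\alpha(x))=\sum_{l\geq 0}(-1)^l\gamma_l(y)$ by $x$ and reindexing then gives
\[
x\,\mathcal{W}(\alpha(x))=\sum_{l\geq 0}(-1)^l\gamma_{l+1}(x)=-\sum_{m\geq 1}(-1)^m\gamma_m(x),
\]
which combined with $\gamma_0(x)=l(x)$ produces the asserted identity.

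Finally, I would check that each step is legitimate under the two hypotheses. If $x\in(0,1)\cap\mathbb{Q}$ has depth $L$, then $\alpha(x)$ has depth $L-1$ and both series are finite sums, so no convergence issue arises. If $x$ is a Wilton number, the relation $x\,\gamma_l(\alpha(x))=\gamma_{l+1}(x)$ (with $x\neq 0$) shows that the tail of the series for $\mathcal{W}(\alpha(x))$ is, up to the nonzero factor $1/x$, the tail of $\mathcal{W}(x)$; hence its partial sums converge as soon as those of $\mathcal{W}(x)$ do, and the termwise manipulation is justified. There is no real obstacle here: the identity is a formal consequence of Definition~\ref{def24} and the trivial shift property $\alpha_j(\alpha(x))=\alpha_{j+1}(x)$, with the only care required being the bookkeeping of the $l=0$ term and of the convergence bookkeeping in the Wilton case.
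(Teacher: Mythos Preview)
Your proposal is correct and follows exactly the route the paper intends: the paper's own proof is simply ``This follows directly from the definition of Wilton's function $\mathcal{W}(x)$,'' and you have carefully unwound that one-line remark by using the shift relation $\alpha_j(\alpha(x))=\alpha_{j+1}(x)$ to show $x\,\gamma_l(\alpha(x))=\gamma_{l+1}(x)$ and then reindexing. There is nothing to add; your convergence bookkeeping for the Wilton and rational cases is appropriate and matches the implicit reasoning behind the paper's assertion.
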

\begin{proof}
This follows directly from the definition of Wilton's function $\mathcal{W}(x)$.
\end{proof}
\begin{lemma}\label{lem210}
For $n\in\mathbb{N}$, $x\in X$ or $x$ a rational number with depth 
$\geq n+1$ we have 
$$\mathcal{L}(x,n)=\mathcal{W}(x)-(-1)^{n+1}T^{n+1}\mathcal{W}(x)\:.$$
\end{lemma}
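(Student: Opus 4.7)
The plan is to proceed by induction on $n$, using Lemma~\ref{lem29} as the engine, applied pointwise but then propagated through powers of the operator $T$.

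For the base case $n=0$, we have $\mathcal{L}(x,0)=l(x)$, and Lemma~\ref{lem29} rearranges to $l(x)=\mathcal{W}(x)+T\mathcal{W}(x)=\mathcal{W}(x)-(-1)^{1}T^{1}\mathcal{W}(x)$, which is the statement at $n=0$ and requires only depth $\geq 1$ (so that $\alpha(x)$ is defined).

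For the inductive step, assume the identity holds at level $n$. Splitting off the new term,
$$\mathcal{L}(x,n+1)=\mathcal{L}(x,n)+(-1)^{n+1}(T^{n+1}l)(x)=\mathcal{W}(x)-(-1)^{n+1}T^{n+1}\mathcal{W}(x)+(-1)^{n+1}(T^{n+1}l)(x).$$
The $T^{n+1}$-terms combine as $(-1)^{n+1}T^{n+1}(l-\mathcal{W})(x)$. The key observation is that, viewed as a functional identity on the set of admissible arguments, Lemma~\ref{lem29} reads $l-\mathcal{W}=T\mathcal{W}$. Substituting into the combined term yields $(-1)^{n+1}T^{n+2}\mathcal{W}(x)=-(-1)^{n+2}T^{n+2}\mathcal{W}(x)$, and we obtain the identity at level $n+1$.

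The one subtle point, and the step I would check carefully, is the legitimacy of substituting the functional equation inside $T^{n+1}$. Because $T^{n+1}f(x)$ is a product of the factors $\alpha_0(x)\alpha_1(x)\cdots\alpha_n(x)$ times $f(\alpha^{n+1}(x))$, the equation $(l-\mathcal{W})(y)=T\mathcal{W}(y)=y\mathcal{W}(\alpha(y))$ is being applied at $y=\alpha^{n+1}(x)$, which in turn requires $\alpha^{n+2}(x)$ to be meaningful. When $x\in X$ this is immediate; when $x$ is rational, the hypothesis depth $\geq n+1$ guarantees $\alpha^{n+1}(x)\neq 0$, so $\alpha^{n+2}(x)$ is defined, and all the $\beta_l$, $\gamma_l$ entering the finite sum defining $\mathcal{W}(\alpha^{n+1}(x))$ are finite. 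Granted this bookkeeping, the induction goes through cleanly.
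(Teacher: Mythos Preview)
Your induction via Lemma~\ref{lem29} is exactly the computation the paper alludes to (it simply cites the analogous argument from \cite{mr5}), and the algebra is correct. One small indexing slip in the last paragraph: at the inductive step you are proving the level-$(n+1)$ statement, whose rational-case hypothesis is depth $\geq n+2$, not $\geq n+1$; with depth $\geq n+2$ one indeed has $\alpha^{n+1}(x)\neq 0$ so that Lemma~\ref{lem29} applies at $y=\alpha^{n+1}(x)$ and $T^{n+2}\mathcal{W}(x)$ is well-defined.
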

\begin{proof}
This follows from Lemma \ref{lem29} by the same computation as in the
proof of Lemma 2.10 in \cite{mr5}, which is valid also for $x$ a rational number with depth $\geq n+1$.
\end{proof}
\begin{lemma}\label{lem211}
For $m\in\mathbb{N}_0$, $x\in X$ or $x$ a rational number of depth $\geq m+1$ we have 
$$\alpha_m(x)\alpha_{m+1}(x)\leq \frac{1}{2}\:.$$
\end{lemma}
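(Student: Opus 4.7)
The plan is to reduce the claim to a one-line computation based on the defining identity for the Gauss map. Set $y:=\alpha_m(x)$ and $a:=a_{m+1}(x)=\lfloor 1/y\rfloor$; by the depth hypothesis both are well defined, with $y\in(0,1)$ and hence $a\in\mathbb{N}$. Since $\alpha_{m+1}(x)=\{1/y\}=1/y-a$, one has
\[
\alpha_m(x)\alpha_{m+1}(x)\;=\;y\!\left(\tfrac{1}{y}-a\right)\;=\;1-ay.
\]
Therefore the desired inequality $\alpha_m(x)\alpha_{m+1}(x)\leq 1/2$ is equivalent to the lower bound $ay\geq 1/2$.

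To obtain $ay\geq 1/2$, I would combine two elementary estimates. First, from the defining property of the floor function, $a+1>1/y$, which rearranges to $(a+1)y>1$, i.e.\ $ay>1-y$. Secondly, $a\geq 1$ implies $ay\geq y$. Adding these two inequalities gives $2ay>y+(1-y)=1$, so $ay>1/2$, and therefore $\alpha_m(x)\alpha_{m+1}(x)=1-ay<1/2$, which is stronger than what is claimed.

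I expect no real obstacle here; the argument is essentially a direct unfolding of the definition of the Gauss map. The only point worth checking is that the inequality $(a+1)y>1$ is strict in all cases covered by the lemma, including the rational case with $\alpha_{m+1}(x)=0$ (which corresponds to $1/y\in\mathbb{N}$, so that $a=1/y$ and $(a+1)y=1+y>1$); this is immediate. As a by-product the proof yields the strict bound $\alpha_m(x)\alpha_{m+1}(x)<1/2$, with the product equal to $0$ precisely when $x$ is rational of depth exactly $m+1$.
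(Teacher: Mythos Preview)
Your proof is correct. The reduction $\alpha_m\alpha_{m+1}=1-ay$ with $y=\alpha_m$ and $a=\lfloor 1/y\rfloor$ is exactly the right move, and the two-line estimate $ay>1-y$ together with $ay\ge y$ cleanly gives $2ay>1$; the edge case $1/y\in\mathbb{N}$ (rational $x$ of depth exactly $m+1$) is handled as you note, since then $ay=1$ and the product vanishes.

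The paper itself does not give an argument here: it simply cites Lemma~2.11 of \cite{mr5} and remarks that the proof there also covers the rational case of depth $\ge m+1$. Your write-up is therefore more self-contained than the paper's treatment; the underlying computation is the standard one for this fact about the Gauss map, so there is no genuine methodological divergence, only the difference between quoting and proving. As a minor bonus, your argument yields the strict inequality $\alpha_m\alpha_{m+1}<\tfrac12$, which the lemma as stated does not claim.
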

\begin{proof}
This is Lemma 2.11 of \cite{mr5}, whose proof is also valid for rational $x$ of depth $\geq m+1$. 
\end{proof}
\begin{definition}\label{def212}
Let $\mathcal{E}$ be a measurable subset of $(0,1)$. The measure $m$ 
is defined by 
$$m(\mathcal{E}):=\frac{1}{\log 2}\int_{\mathcal{E}}\frac{dx}{1+x}\:.$$
\end{definition}
\begin{lemma}\label{lem213}
The measure $m$ is invariant with respect to the map $\alpha$, i.e. 
$$m(\alpha(\mathcal{E}))=m(\mathcal{E})$$
for all measurable subsets $\mathcal{E}\subseteq (0,1)$.
\end{lemma}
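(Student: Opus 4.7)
The plan is to prove invariance of $m$ under the Gauss map $\alpha$ in its standard dynamical sense, namely that $m(\alpha^{-1}(\mathcal{E}))=m(\mathcal{E})$ for every Borel set $\mathcal{E}\subseteq(0,1)$. By a standard $\pi$-system / monotone class argument, it suffices to establish the identity when $\mathcal{E}$ is an open interval $(a,b)\subseteq(0,1)$; the general case then follows because the collection of intervals generates the Borel $\sigma$-algebra of $(0,1)$ and the two finite measures $m$ and $m\circ\alpha^{-1}$ agree on it.

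First I would describe the preimage explicitly. Since $\alpha(x)=1/x-\lfloor 1/x\rfloor$, one has $\alpha(x)\in(a,b)$ if and only if there exists $n\in\mathbb{N}$ with $1/x\in(n+a,n+b)$, i.e.\ $x\in\bigl(1/(n+b),\,1/(n+a)\bigr)$. Hence
$$\alpha^{-1}\bigl((a,b)\bigr)=\bigsqcup_{n=1}^{\infty}\left(\frac{1}{n+b},\frac{1}{n+a}\right),$$
a disjoint union because the intervals $(n+a,n+b)$ are contained in disjoint unit intervals.

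Next I would compute directly. Using the density $\frac{1}{(\log 2)(1+x)}$ defining $m$, each piece contributes
$$\int_{1/(n+b)}^{1/(n+a)}\frac{dx}{1+x}=\log\frac{(n+a+1)(n+b)}{(n+a)(n+b+1)},$$
so that
$$(\log 2)\,m\bigl(\alpha^{-1}((a,b))\bigr)=\sum_{n=1}^{\infty}\Bigl[\bigl(\log(n+a+1)-\log(n+a)\bigr)-\bigl(\log(n+b+1)-\log(n+b)\bigr)\Bigr].$$
The partial sums of the two telescoping series are $\log(N+a+1)-\log(1+a)$ and $\log(N+b+1)-\log(1+b)$; their difference tends to $\log\frac{1+b}{1+a}$ as $N\to\infty$, so
$$m\bigl(\alpha^{-1}((a,b))\bigr)=\frac{1}{\log 2}\log\frac{1+b}{1+a}=\frac{1}{\log 2}\int_{a}^{b}\frac{dx}{1+x}=m\bigl((a,b)\bigr).$$

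There is no real obstacle here; the only delicate point is the justification of the telescoping, which requires absolute convergence of the individual series $\sum(u_n-v_n)$ rather than of $\sum u_n$ and $\sum v_n$ separately (each of the latter diverges). I would note that $u_n-v_n=O(1/n^2)$ by a direct expansion, so the interchange of sum and limit is immediate, completing the verification on intervals and hence, by the measure-theoretic extension mentioned above, on all Borel sets.
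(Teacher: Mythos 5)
Your proof is correct and is the standard verification of invariance of the Gauss measure; the paper itself offers no proof of this lemma, disposing of it with ``This result is well-known,'' so your computation genuinely fills the gap rather than diverging from an existing argument. The decomposition of the preimage into $\bigsqcup_{n\ge 1}(1/(n+b),1/(n+a))$, the evaluation of each piece against the density $1/((\log 2)(1+x))$, and the telescoping with the $O(1/n^2)$ remark are all in order, as is the reduction from Borel sets to intervals via a $\pi$-system argument.

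One point worth flagging explicitly: you silently (and correctly) reinterpreted the lemma's displayed identity $m(\alpha(\mathcal{E}))=m(\mathcal{E})$ as the preimage statement $m(\alpha^{-1}(\mathcal{E}))=m(\mathcal{E})$, which is what ``$m$ is invariant with respect to $\alpha$'' standardly means and what the paper actually needs downstream. Taken literally, the forward-image version is false, since $\alpha$ is not injective: for $\mathcal{E}=(1/2,1)$ one has $\alpha(\mathcal{E})=(0,1)$, yet
$$m\bigl((1/2,1)\bigr)=\frac{1}{\log 2}\log\frac{4}{3}<1=m\bigl((0,1)\bigr)\:.$$
It would strengthen your write-up to state this reinterpretation up front rather than just announcing it as ``the standard dynamical sense,'' since a reader comparing against the lemma as printed might otherwise think you are proving something other than what was asked.
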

\begin{proof}
This result is well-known.
\end{proof}
\begin{lemma}\label{lem213}
Let $n\in \mathbb{N}$. For $f\in L^p$, we have 
$$\int_0^1|T^n f(x)|^p dm(x)\leq g^{(n-1)p}\int_0^1|f(x)|^pdm(x)\:,$$
where 
$$g=\frac{\sqrt{5}-1}{2}<1\:.$$
\end{lemma}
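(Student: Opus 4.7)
The plan is to unfold the operator $T^n$ explicitly and combine a pointwise bound on $\beta_{n-1}$ with the invariance of $m$ under $\alpha$ that was just established. Since $Tf(x)=xf(\alpha(x))$, a straightforward induction on $n$ gives
$$T^n f(x) = \alpha_0(x)\alpha_1(x)\cdots \alpha_{n-1}(x)\, f(\alpha_n(x)) = \beta_{n-1}(x)\, f(\alpha_n(x))\,,$$
using $\alpha_0(x)=x$ and $\alpha_i=\alpha\circ\alpha_{i-1}$. Consequently
$$\int_0^1|T^n f(x)|^p\, dm(x) = \int_0^1 \beta_{n-1}(x)^p\, |f(\alpha_n(x))|^p\, dm(x)\,.$$
The strategy is to pull $\beta_{n-1}(x)^p$ out of the integral via a uniform pointwise estimate, and then reduce the integral of $|f(\alpha_n(x))|^p$ to $\|f\|_{L^p(m)}^p$ by iterating the $\alpha$-invariance $n$ times, since $\alpha_n$ is the $n$-fold composition of $\alpha$ with itself.

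The core of the argument is therefore the uniform estimate $\beta_{n-1}(x)\leq g^{n-1}$. The appearance of $g=(\sqrt{5}-1)/2$ in the statement strongly suggests a Fibonacci-type argument, and this is the natural route: invoking the classical identity $\beta_m(x)=|q_m x - p_m|$ from the theory of continued fractions, the standard convergent estimate $|q_m x - p_m|\leq 1/q_{m+1}$, and the Fibonacci lower bound $q_{m+1}\geq F_{m+2}\geq \phi^m = g^{-m}$ (with $\phi=(1+\sqrt{5})/2$ and $F_m$ the $m$-th Fibonacci number), one obtains $\beta_m(x)\leq g^m$ for every $m\geq 0$, hence $\beta_{n-1}(x)\leq g^{n-1}$. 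I expect this to be the hardest step of the proof. It is worth noting that Lemma \ref{lem211} alone is not strong enough: pairing consecutive factors yields only $\beta_{n-1}(x)\leq 2^{-(n-1)/2}$, and $2^{-1/2}>g$, so the sharper continued fractions input is genuinely needed.

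With the uniform bound in hand, the proof closes in a single chain of (in)equalities:
$$\int_0^1\beta_{n-1}(x)^p|f(\alpha_n(x))|^p\,dm(x) \leq g^{(n-1)p}\int_0^1 |f(\alpha_n(x))|^p\, dm(x) = g^{(n-1)p}\int_0^1|f(x)|^p\, dm(x)\,,$$
where the final equality is the $\alpha$-invariance of $m$ (established in the preceding lemma) applied $n$ times. This yields the claimed bound.
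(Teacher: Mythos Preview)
Your argument is correct and is precisely the standard Marmi--Moussa--Yoccoz proof that the paper invokes by citation rather than reproducing. The paper itself gives no details here (it simply refers to \cite{Marmi} and \cite{mr5}), so there is nothing substantive to compare; you have supplied exactly the computation behind those references: the identity $T^n f(x)=\beta_{n-1}(x)f(\alpha_n(x))$, the uniform bound $\beta_{n-1}(x)\le g^{n-1}$ via $\beta_m=|q_mx-p_m|\le 1/q_{m+1}$ and the Fibonacci lower bound $q_{m+1}\ge F_{m+2}\ge \phi^m$, and finally the invariance of the Gauss measure.

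One small remark on the last step: what you actually need is $\int_0^1|f(\alpha(x))|^p\,dm(x)=\int_0^1|f(x)|^p\,dm(x)$, i.e.\ $m\circ\alpha^{-1}=m$. The preceding lemma in the paper is phrased as $m(\alpha(\mathcal{E}))=m(\mathcal{E})$, which taken literally is not the correct form of invariance for a non-injective map (and in fact fails for the Gauss map on, say, $\mathcal{E}=(1/2,1)$). The intended and well-known statement is $m(\alpha^{-1}(\mathcal{E}))=m(\mathcal{E})$, and that is what your argument uses. This is a wording issue in the paper, not a gap in your proof.
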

\begin{proof}
For the proof of this result, due to Marmi, Moussa and Yoccoz \cite{Marmi}, see \cite{mr5} Lemma 2.8, (ii).
\end{proof}
\begin{definition}\label{def215}
Let $s\in\mathbb{N}$, $b_0=0$ and $b_1,\ldots, b_s\in\mathbb{N}$. The \textbf{cell} of depth $s$, $\mathcal{C}(b_1,\ldots, b_s)$ is the interval with the endpoints $[0;b_1,\ldots, b_s]$ and \mbox{$[0;b_1,\ldots, b_{s-1},b_s+1]$.}
\end{definition}
\begin{lemma}\label{lem216}
In the interior of the cell $\mathcal{C}(b_1,\ldots, b_s)$ of depth $s$, the 
functions $a_j, p_j, q_j$ are constants for $j\leq s$,
$$a_j(x)=b_j,\ \frac{p_j(x)}{q_j(x)}=[0; b_1,\ldots, b_j],\ (x\in\mathcal{C}(b_1,\ldots, b_j))\:.$$
The endpoints of $\mathcal{C}(b_1,\ldots, b_s)$ are 
\[
\frac{p_s}{q_s}\ \ \text{and}\ \ \frac{p_s+p_{s-1}}{q_s+q_{s-1}}\:.\tag{2.1}
\]
For $x\in X$ and $s\in\mathbb{N}$ there is a unique cell of depth $s$
that contains $x$.\\
Within a cell of depth $s$ we have the derivatives
$$\alpha_s'=(-1)^s(q_s+\alpha_sq_{s-1})^2\:,$$
$$\gamma_s'=(-1)^sq_{s-1}\log\left(\frac{1}{\alpha_s}\right)+(-1)^{s-1}\beta_s\:.$$
\end{lemma}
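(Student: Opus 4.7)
The plan is to prove the four assertions in the natural order: cell–constancy, endpoint formula, uniqueness, and then the two derivative formulas. The first three parts are essentially bookkeeping with the continued–fraction algorithm, while the derivative identities are a computation carried out on an explicit parametrisation of $x$ inside the cell.

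I would first prove by induction on $s$ that on the interior of $\mathcal{C}(b_1,\dots,b_s)$ one has $a_j(x)=b_j$ and $p_j/q_j=[0;b_1,\dots,b_j]$ for $j\le s$. For $s=1$ the interval with endpoints $1/b_1$ and $1/(b_1+1)$ is precisely the set where $[1/x]=b_1$, so $a_1(x)=b_1$ and $\alpha_1(x)=1/x-b_1$. For the inductive step, observe that $\alpha$ maps $\mathcal{C}(b_1,\dots,b_s)$ onto $\mathcal{C}(b_2,\dots,b_s)$ and apply Definition \ref{def21} together with the induction hypothesis applied to $\alpha(x)$. The endpoint formula $(2.1)$ then follows from the recursion in Lemma \ref{lem22}: substituting $a_s\mapsto a_s+1$ in the convergent $p_s/q_s$ yields $((a_s+1)p_{s-1}+p_{s-2})/((a_s+1)q_{s-1}+q_{s-2})=(p_s+p_{s-1})/(q_s+q_{s-1})$. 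Uniqueness of the cell of depth $s$ containing a given irrational $x$ is immediate from the uniqueness of the continued fraction expansion of $x$.

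For the derivative formulas I would use the key parametrisation that holds throughout the cell, namely
\[
x=\frac{p_s+\alpha_s(x)\,p_{s-1}}{q_s+\alpha_s(x)\,q_{s-1}},
\]
which one proves by induction on $s$ from $x=1/(a_1+\alpha_1(x))$ and the recursion of Lemma \ref{lem22} (with $p_j,q_j$ constant on the cell). Solving for $\alpha_s$ gives
\[
\alpha_s=\frac{p_s-xq_s}{xq_{s-1}-p_{s-1}},
\]
and a direct differentiation together with the Wronskian identity $p_sq_{s-1}-p_{s-1}q_s=(-1)^{s+1}$ (proved by induction from the $p,q$ recursion) yields
\[
\alpha_s'=\frac{p_{s-1}q_s-p_sq_{s-1}}{(xq_{s-1}-p_{s-1})^2}.
\]
The parametrisation also implies $p_{s-1}-xq_{s-1}=\pm(q_s+\alpha_sq_{s-1})^{-1}$, so squaring eliminates the sign and one obtains the claimed $\alpha_s'=(-1)^s(q_s+\alpha_sq_{s-1})^2$.

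For $\gamma_s$ I would use $\gamma_s(x)=\beta_{s-1}(x)\log(1/\alpha_s(x))$, together with the companion identity $\beta_{s-1}=(-1)^{s-1}(q_{s-1}x-p_{s-1})$, which follows by induction from $\beta_s=\beta_{s-1}\alpha_s$ and the parametrisation above. Because $p_{s-1},q_{s-1}$ are constants on the cell, differentiating gives $\beta_{s-1}'=(-1)^{s-1}q_{s-1}$, and the product and chain rules combine to
\[
\gamma_s'=(-1)^{s-1}q_{s-1}\log\!\bigl(1/\alpha_s\bigr)-\beta_{s-1}\,\alpha_s'/\alpha_s.
\]
The second term is then simplified using $\alpha_s'=(-1)^s(q_s+\alpha_sq_{s-1})^2$ and the identity $\beta_{s-1}(q_s+\alpha_sq_{s-1})=\pm1$ derived above, together with $\beta_s=\beta_{s-1}\alpha_s$, so that it collapses to a scalar multiple of $\beta_s$ with the sign prescribed in the statement. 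The main obstacle is precisely this sign bookkeeping: one must keep track of the alternation in $p_sq_{s-1}-p_{s-1}q_s$, in $\beta_s=(-1)^s(q_sx-p_s)$ and in the parametrisation's resolution of the $\pm$ ambiguity, so that all three derivative formulas come out with the signs stated in Lemma \ref{lem216}.
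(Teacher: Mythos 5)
The paper's ``proof'' of Lemma \ref{lem216} is just a citation to sections 2.3 and 2.4 of Balazard--Martin, so your worked-out derivation is doing more than the paper itself. The first three assertions, the parametrisation $x=(p_s+\alpha_sp_{s-1})/(q_s+\alpha_sq_{s-1})$, the Wronskian identity, and the $\alpha_s'$ computation are all correct and exactly the right tools.

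There is, however, a genuine problem in the $\gamma_s'$ step: carried to the end, your own computation contradicts the formula printed in the lemma, and the final ``collapse'' you assert is wrong. You correctly obtain
\begin{equation*}
\gamma_s'=(-1)^{s-1}q_{s-1}\log\!\left(\frac{1}{\alpha_s}\right)-\frac{\beta_{s-1}\alpha_s'}{\alpha_s}.
\end{equation*}
Since $\beta_{s-1}(q_s+\alpha_sq_{s-1})=1$ exactly (both factors are positive, so there is no $\pm$), $\alpha_s'=(-1)^s(q_s+\alpha_sq_{s-1})^2$, and $\beta_s=\beta_{s-1}\alpha_s$, the second term is
\begin{equation*}
-\frac{\beta_{s-1}\alpha_s'}{\alpha_s}=-(-1)^s\,\frac{q_s+\alpha_sq_{s-1}}{\alpha_s}=\frac{(-1)^{s-1}}{\beta_s},
\end{equation*}
a multiple of $\beta_s^{-1}$, not of $\beta_s$ as you claim, and the logarithmic term carries $(-1)^{s-1}$, not $(-1)^s$. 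So the computation yields $\gamma_s'=(-1)^{s-1}\bigl(q_{s-1}\log(1/\alpha_s)+\beta_s^{-1}\bigr)$, which differs from the lemma in both respects, and this is not a sign-bookkeeping issue you can massage away: at $s=0$ we have $\gamma_0=\log(1/x)$, hence $\gamma_0'=-1/x$, which agrees with $(-1)^{-1}\bigl(q_{-1}\log(1/x)+\beta_0^{-1}\bigr)=-1/x$ but not with the printed $q_{-1}\log(1/x)-\beta_0=-x$. In short, the gap is the unjustified ``collapses to a scalar multiple of $\beta_s$'' claim; a careful computation shows it collapses to $(-1)^{s-1}\beta_s^{-1}$, and the formula for $\gamma_s'$ as stated in the lemma appears to be a misprint that no amount of sign-chasing will reproduce.
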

\begin{proof}
See \cite{balaz2}, sections 2.3 and 2.4.
\end{proof}
\begin{lemma}\label{lem217}
Let $\mathcal{C}(b_1,\ldots, b_s)$ be as in Definition \ref{def215}. Then we have
$$\log\text{meas}(\mathcal{C}(b_1,\ldots, b_s))\leq s\log 2-2\sum_{j=1}^s \log b_j\:.$$
\end{lemma}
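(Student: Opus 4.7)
The plan is to reduce the measure estimate to a classical lower bound on the denominator $q_s$ of the continued-fraction convergent, using the explicit endpoint description of a cell furnished by Lemma \ref{lem216}.

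First, by (2.1) of Lemma \ref{lem216}, the cell $\mathcal{C}(b_1,\ldots,b_s)$ is the interval with endpoints $p_s/q_s$ and $(p_s+p_{s-1})/(q_s+q_{s-1})$, where we adopt the convention $q_{-1}=0$, $q_0=1$ and use the convergents attached to the partial quotients $b_1,\ldots,b_s$ via the recursion of Lemma \ref{lem22}. A direct computation, invoking the classical identity $p_s q_{s-1}-p_{s-1}q_s=\pm 1$, gives
\[
\operatorname{meas}(\mathcal{C}(b_1,\ldots,b_s))=\left|\frac{p_s}{q_s}-\frac{p_s+p_{s-1}}{q_s+q_{s-1}}\right|=\frac{1}{q_s(q_s+q_{s-1})}.
\]

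Next, I would establish by induction on $s$ the lower bound $q_s\geq\prod_{j=1}^{s}b_j$. The base case $q_1=b_1$ is immediate from the recursion and the initial data. For the inductive step, the recursion $q_{j+1}=b_{j+1}q_j+q_{j-1}\geq b_{j+1}q_j$ (since $q_{j-1}\geq 0$) propagates the product bound. Combining this with $q_s+q_{s-1}\geq q_s$ yields
\[
q_s(q_s+q_{s-1})\geq q_s^2\geq\prod_{j=1}^{s}b_j^2.
\]

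Taking logarithms gives $\log\operatorname{meas}(\mathcal{C}(b_1,\ldots,b_s))\leq -2\sum_{j=1}^{s}\log b_j$, which is even stronger than the asserted inequality, since $s\log 2\geq 0$. There is no real obstacle here; the only point requiring any vigilance is the bookkeeping of the initial conditions in the $q_j$-recursion, so that the induction $q_{j+1}\geq b_{j+1}q_j$ starts from $q_0=1$ and is not spoiled by a degenerate first step. The slack factor $2^s$ on the right-hand side is presumably left in the statement because the authors want a form symmetric with the standard Gauss-measure estimates used in subsequent cell-decomposition arguments.
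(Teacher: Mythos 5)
Your proof is correct, and it takes a genuinely different route from the paper. The paper also begins from the endpoint description $\operatorname{meas}\mathcal{C}(b_1,\ldots,b_j)=1/(q_j(q_j+q_{j-1}))$, but then compares consecutive depths: it bounds the ratio
\[
\frac{\operatorname{meas}\mathcal{C}(b_1,\ldots,b_{j+1})}{\operatorname{meas}\mathcal{C}(b_1,\ldots,b_j)}
=\frac{q_j(q_j+q_{j-1})}{(b_{j+1}q_j+q_{j-1})((b_{j+1}+1)q_j+q_{j-1})}\le\frac{2q_j^2}{b_{j+1}^2q_j^2}=\frac{2}{b_{j+1}^2},
\]
using $q_{j-1}\le q_j$ in the numerator, and then telescopes starting from $\operatorname{meas}\mathcal{C}(b_1)$. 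Each step loses a factor of $2$, which is where the $s\log 2$ comes from. You instead go straight for the exact length at depth $s$ and feed in the elementary lower bound $q_s\ge\prod_{j=1}^s b_j$; this yields $\log\operatorname{meas}\mathcal{C}(b_1,\ldots,b_s)\le-2\sum_{j=1}^s\log b_j$, which is strictly sharper (no $s\log 2$ slack) and avoids both the telescoping bookkeeping and the implicit base-case check $\operatorname{meas}\mathcal{C}(b_1)\le 2/b_1^2$. The weaker form with $s\log 2$ is what the paper actually needs because the companion upper bound on $q_s$ in Lemma~\ref{lem218} carries that same loss, and Lemma~\ref{lem219} pairs the two; your version would serve equally well there. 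One small point worth keeping explicit, which you did: the induction starts from $q_0=1$, $q_{-1}=0$, so $q_1=b_1$ and the step $q_{j+1}\ge b_{j+1}q_j$ is safe.
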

\begin{proof}
We compare the measure of cells of depths $j$ and $j+1$. By (2.1) of Lemma 2.16,
we know that $\mathcal{C}(b_1,\ldots, b_j)$ has the endpoints 
$$\frac{p_j}{q_j}\ \ \text{and}\ \ \frac{p_j+p_{j-1}}{q_j+q_{j-1}}$$
and thus the length
$$\left| \frac{p_j}{q_j}-\frac{p_j+p_{j-1}}{q_j+q_{j-1}}\right|=\frac{|p_jq_{j-1}-q_jp_{j-1}|}{q_j(q_j+q_{j-1})}=\frac{1}{q_j(q_j+q_{j-1})}\:.$$
The cell $\mathcal{C}(b_1,\ldots, b_{j+1})$ has the length
$$\frac{1}{q_{j+1}(q_{j+1}+q_j)}\:.$$
Thus
\begin{align*}
\frac{meas (\mathcal{C}(b_1,\ldots, b_{j+1}))}{meas(\mathcal{C}(b_1,\ldots, b_{j}))}&=\frac{q_j(q_j+q_{j-1})}{(b_{j+1}q_j+q_{j-1})((b_{j+1}+1)q_j+q_{j-1})}\\
&\leq \frac{2q_j^2}{b_{j+1}^2q_j^2}=\frac{2}{b^2_{j+1}}\:.\tag{2.2}
\end{align*}
From (2.2) we obtain:
$$meas(\mathcal{C}(b_1,\ldots, b_{s}))=meas (\mathcal{C}(b_1))\prod_{j=1}^{s-1}\frac{meas (\mathcal{C}(b_1,\ldots, b_{j+1}))}{meas(\mathcal{C}(b_1,\ldots, b_{j}))}\leq 2^s\left(\prod_{j=1}^s b_j \right)^{-2}\:,$$
which concludes the proof of Lemma \ref{lem217}.
\end{proof}
\begin{lemma}\label{lem218}
We have $$\log q_s\leq 2\sum_{1\leq j\leq s}\log b_j+s\log 2\:.$$
\end{lemma}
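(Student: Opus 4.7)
The cleanest route is a direct induction on $s$ using the recurrence $q_{s+1}=b_{s+1}q_s+q_{s-1}$ from Lemma \ref{lem22}; the Lemma \ref{lem217} chain (relating cell measures to products of $b_j^{-2}$) gives only a lower bound on $q_s$, so it is not the right tool here.

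For the base case $s=1$, I would observe that $q_1=b_1$, so $\log q_1=\log b_1\leq 2\log b_1+\log 2$ since $b_1\geq 1$. For the inductive step, assume $\log q_s\leq 2\sum_{j=1}^s\log b_j+s\log 2$. Since the sequence $\{q_j\}$ is non-decreasing, we have $q_{s-1}\leq q_s$, and therefore
$$q_{s+1}=b_{s+1}q_s+q_{s-1}\leq (b_{s+1}+1)q_s\leq 2b_{s+1}q_s,$$
the last step using $b_{s+1}\geq 1$. Taking logarithms and applying the induction hypothesis,
$$\log q_{s+1}\leq \log 2+\log b_{s+1}+\log q_s\leq (s+1)\log 2+\log b_{s+1}+2\sum_{j=1}^s\log b_j.$$
Since $\log b_{s+1}\geq 0$, we can replace $\log b_{s+1}$ by $2\log b_{s+1}$ on the right-hand side, giving $\log q_{s+1}\leq 2\sum_{j=1}^{s+1}\log b_j+(s+1)\log 2$, which closes the induction.

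There is essentially no technical obstacle: the recurrence, monotonicity of $\{q_j\}$, and the trivial inequality $b_{s+1}+1\leq 2b_{s+1}$ suffice. The slight looseness in the stated bound (the factor of $2$ in $2\sum\log b_j$ rather than $\sum\log b_j$) comes precisely from the freedom to replace $\log b_{s+1}$ by $2\log b_{s+1}$ in the induction, and presumably is what the authors need in later arguments involving Lemma \ref{lem217}, where $2\sum\log b_j$ already appears.
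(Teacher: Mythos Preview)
Your proof is correct and uses exactly the same idea as the paper, namely the inequality $q_{j+1}=b_{j+1}q_j+q_{j-1}\leq 2b_{j+1}q_j$; the paper states only this one line, and your write-up simply spells out the induction it implies. (In fact iterating that inequality directly gives the slightly sharper $\log q_s\leq \sum_{j=1}^s\log b_j+s\log 2$, so the factor $2$ in the stated bound is harmless slack, as you suspected.)
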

\begin{proof}
This follows from 
$$q_{j+1}=b_{j+1}q_j+q_{j-1}\leq 2b_{j+1}q_j\:.$$
\end{proof}
\begin{lemma}\label{lem219}
Let $C_1>0$ be sufficiently large. Then there exists $C_2>0$, such that for
$s\geq s_0$,
$$meas\{x\in(0,1)\::\: q_s(x)\geq \exp(C_1s)\}\leq \exp(-C_2s)\:.$$
\end{lemma}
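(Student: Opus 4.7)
The plan is to represent the set $\{x \in (0,1) : q_s(x) \geq \exp(C_1 s)\}$ as a disjoint union of (interiors of) cells $\mathcal{C}(b_1,\ldots,b_s)$ of depth $s$ --- by Lemma \ref{lem216} the function $q_s$ is constant on each such cell --- and then to combine the cell-measure estimate of Lemma \ref{lem217} with the growth estimate of Lemma \ref{lem218} via a Markov-type inequality.

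Concretely, I would fix a parameter $t \in (0,1/2)$ and use the pointwise bound $\mathbf{1}_{\{q_s \geq \exp(C_1 s)\}} \leq q_s^{\,t} \exp(-C_1 s t)$, which is valid on every cell. By Lemma \ref{lem217} the measure of $\mathcal{C}(b_1,\ldots,b_s)$ is at most $2^s \prod_j b_j^{-2}$, and by Lemma \ref{lem218} we have $q_s \leq 2^s \prod_j b_j^{2}$. Summing over all tuples $(b_1,\ldots,b_s)\in\mathbb{N}^s$ and factoring the geometric product of independent sums then gives
\begin{align*}
\mathrm{meas}\{x : q_s(x) \geq \exp(C_1 s)\}
&\leq \exp(-C_1 s t)\sum_{(b_1,\ldots,b_s)\in \mathbb{N}^s} 2^s \prod_{j=1}^s b_j^{-2} \cdot 2^{st}\prod_{j=1}^s b_j^{2t} \\
&= \bigl(2^{1+t}\,\zeta(2-2t)\bigr)^{s} \exp(-C_1 s t).
\end{align*}

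Since $t<1/2$ we have $2-2t>1$, so $\zeta(2-2t)$ is finite, and the whole expression is purely exponential in $s$. Choosing $C_1$ large enough that $C_1 t > \log\bigl(2^{1+t}\,\zeta(2-2t)\bigr)$ then yields the claim with $C_2 := C_1 t - \log\bigl(2^{1+t}\,\zeta(2-2t)\bigr) > 0$. There is no real obstacle here: the factors $\prod_j b_j^{\pm 2}$ coming from Lemmas \ref{lem217} and \ref{lem218} nearly cancel, and inserting the small power $t$ breaks the symmetry just enough to make the sum over $b_j$ convergent. The only delicate point is that the constraint $t<1/2$ combined with the required inequality $C_1 t > \log\bigl(2^{1+t}\zeta(2-2t)\bigr)$ is precisely what forces the hypothesis that $C_1$ be taken sufficiently large.
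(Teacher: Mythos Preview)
Your proof is correct. Both your argument and the paper's start from the same cell decomposition and the same two estimates (Lemmas \ref{lem217} and \ref{lem218}), but then diverge in how they extract exponential decay. The paper restricts to those tuples $(b_1,\ldots,b_s)$ for which Lemma \ref{lem218} forces $\sum_j \log b_j \geq (C_1-\log 2)s$, bounds the resulting constrained sum $\sum b_1^{-2}\cdots b_s^{-2}$ by an $s$-fold integral, changes variables to recognise the tail probability $\Pr(X_1+\cdots+X_s \geq (C_1-\log 2)s)$ for i.i.d.\ exponentials with rate $2$, and then appeals to the explicit Gamma density. You instead apply a Chernoff/Markov inequality with the auxiliary weight $q_s^{\,t}$, which lets you drop the constraint entirely and sum over all of $\mathbb{N}^s$; the sum factors as $\zeta(2-2t)^s$ and no integral or probabilistic identification is needed. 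Your route is shorter and more self-contained; the paper's route makes the large-deviations structure explicit, which some readers may find illuminating, but at the cost of invoking the Gamma distribution. In substance the two arguments are equivalent---the Chernoff bound is precisely how one would estimate that Gamma tail anyway---and both make clear that the hypothesis ``$C_1$ sufficiently large'' is exactly what is needed to beat the base constant (your $2^{1+t}\zeta(2-2t)$, their $4^s$ times the Gamma tail).
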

\begin{proof}
By partition into cells of depth $s$ we obtain from Lemmas \ref{lem217} and \ref{lem218}
the following:
\begin{align*}
&meas\{x\in(0,1)\::\: q_s(x)\geq \exp(C_1s)\}=\sum_{\substack{\vec{b}=(b_1,\ldots,b_s)\in\mathbb{N}^s\::\\(\sum_{1\leq j\leq s}\log b_j)+s\log 2\geq C_1s}}meas(\mathcal{C}(b_1,\ldots, b_{s}))\\
&\ \ \ \ \ \ \ \leq 2^s\sum_{\substack{\vec{b}\in\mathbb{N}^s\::\\ \sum_{1\le j\leq s}\log b_j\geq (C_1-\log2)s}}b_1^{-2}b_2^{-2}\cdots b_s^{-2}\\
&\ \ \ \ \ \ \ =2^s \sum_{\substack{\vec{b}\in\mathbb{N}^s\::\\ \sum_{1\leq j\leq s}\log b_j\geq (C_1-\log2)s}}\left(\int_{b_1}^{b_1+1}[u_1]^{-2}\:du_1  \right)\cdots \left(\int_{b_s}^{b_s+1}[u_s]^{-2}\:du_s  \right)\\
&\ \ \ \ \ \ \ \leq 4^s \int\displaylimits_{\substack{[1,\infty)^s\::\\ \sum_{1\leq j\leq s}\log u_j\geq (C_1-\log 2)s}} (u_1\cdots u_s)^{-2}\: du_1\cdots du_s\\
&\ \ \ \ \ \ \ =4^s \int\displaylimits_{{\mathbb{R}_+^s\::\:v_1+\cdots+v_s\geq (C_1-\log 2)s}} \exp(-2v_1-\cdots -2v_s)\: dv_1\cdots dv_s\:. \tag{2.3}
\end{align*}
The integral in (2.3) is 
$$2^{-s}Prob(X_1+\cdots+X_s\geq (C_1-\log2)s)\:,$$
where $X_1,\ldots, X_s$ are i.i.d. exponentially distributed random variables with rate 2. The distribution of the sum $X_1+\cdots+X_s$ is the Gamma 
distribution with density 
\[
f(Cs, s, 2)=\frac{2^s(Cs)^{s-1}\exp(-2Cs)}{(s-1)!}\tag{2.4}
\]
Lemma \ref{lem219} now follows from (2.3) and (2.4).
\end{proof}
\section{Vaughan's identity}
We now express the M\"obius function by Vaughan's identity.
\begin{lemma}\label{lem31}
Let $w>1$. For $h\in\mathbb{N}$ we have:
$$\mu(h)=c_1(h)+c_2(h)+c_3(h)\:,$$
where 
$$c_1(h):=\sum_{\substack{\alpha\beta\gamma=h\\\alpha\geq w,\: \beta\geq w}}\mu(\gamma)c_4(\alpha)c_4(\beta)\:,$$
for
$$c_4(\alpha):=-\sum_{\substack{(d_1,d_2)\::\: d_1d_2=\alpha\\ d_1\leq w}}\mu(d_1)\:.$$
\begin{eqnarray}
c_2(h):=\left\{ 
  \begin{array}{l l}
   2\mu(h)\: & \quad \text{, if $h\leq w$}\vspace{2mm}\\ 
    0\: & \quad \text{, if $h>w$}\\
  \end{array} \right.
\nonumber
\end{eqnarray}
and
$$c_3(h):=-\sum_{\substack{\alpha\beta\gamma=h\\\alpha\leq w,\: \beta\leq w}}\mu(\alpha)\mu(\beta)\:.$$
\end{lemma}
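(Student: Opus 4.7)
The plan is to establish the identity via comparison of Dirichlet-series coefficients. Set $M_w(s) := \sum_{n \leq w}\mu(n) n^{-s}$, so that formally $\zeta(s)^{-1} = \sum_h \mu(h) h^{-s}$. The starting point is the algebraic identity
$$\frac{1}{\zeta(s)} \;=\; 2M_w(s) \;-\; \zeta(s) M_w(s)^2 \;+\; \frac{\bigl(1-\zeta(s)M_w(s)\bigr)^2}{\zeta(s)},$$
which is verified in one line by expanding the square in the last summand and cancelling against the other two pieces. The task then reduces to reading off the Dirichlet coefficient of $h^{-s}$ from each of the three pieces and matching it to $c_1(h)$, $c_2(h)$, $c_3(h)$.

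First I would dispatch the two easy terms. The coefficient of $h^{-s}$ in $2M_w(s)$ is $2\mu(h)$ for $h\leq w$ and $0$ otherwise, which is $c_2(h)$. The coefficient of $h^{-s}$ in $-\zeta(s) M_w(s)^2$ comes from factoring $h=\alpha\beta\gamma$ with $\alpha,\beta\leq w$ (the two $M_w$ factors) and $\gamma$ arbitrary (the $\zeta$ factor), yielding exactly $-\sum_{\alpha\beta\gamma=h,\,\alpha,\beta\leq w}\mu(\alpha)\mu(\beta) = c_3(h)$.

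The core of the argument is the third piece. Writing $1-\zeta(s)M_w(s) = \sum_{\alpha\geq 1}\epsilon(\alpha)\alpha^{-s}$, direct expansion gives $\epsilon(\alpha) = -\sum_{d_1d_2=\alpha,\,d_1\leq w}\mu(d_1) = c_4(\alpha)$ for $\alpha\geq 2$, while $\epsilon(1) = 1 + c_4(1) = 1-\mu(1) = 0$. Moreover, for $2\leq\alpha\leq w$ every divisor $d_1$ of $\alpha$ satisfies $d_1\leq w$, so the elementary identity $\sum_{d\mid\alpha}\mu(d) = 0$ forces $c_4(\alpha) = 0$ automatically. Thus the series effectively runs over $\alpha\geq w$, matching the range appearing in the definition of $c_1$. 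Squaring and dividing by $\zeta$, the coefficient of $h^{-s}$ becomes
$$\sum_{\alpha\beta\gamma=h}\epsilon(\alpha)\epsilon(\beta)\mu(\gamma) \;=\; \sum_{\substack{\alpha\beta\gamma=h\\ \alpha,\beta\geq w}}c_4(\alpha)c_4(\beta)\mu(\gamma) \;=\; c_1(h),$$
which closes the identity.

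The whole proof is formal, so there is no serious obstacle; the only point requiring care is the bookkeeping for $\alpha=1$ in the third term, where one uses $\mu(1)=1$ to cancel the additive $1$ against the $\alpha=1$ contribution of $\zeta(s)M_w(s)$, together with $\sum_{d\mid n}\mu(d)=0$ for $n\geq 2$ to erase the spurious contribution of the range $2\leq\alpha\leq w$.
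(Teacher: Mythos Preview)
Your argument is correct and is precisely the standard Dirichlet-series derivation of Vaughan's identity; the paper itself gives no proof at all and simply cites Iwaniec--Kowalski, where this same generating-function computation (expanding $\zeta^{-1}=2M_w-\zeta M_w^{2}+\zeta^{-1}(1-\zeta M_w)^{2}$ and reading off coefficients) is carried out. The only cosmetic point is that your vanishing of $\epsilon(\alpha)$ for $\alpha\le w$ yields the range $\alpha>w$ rather than $\alpha\ge w$, but since $c_4(\alpha)=0$ for every integer $2\le\alpha\le w$ the two ranges give the same sum, so the statement as written is recovered.
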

\begin{proof}
Cf. \cite{IKW}.
\end{proof}
%
We now obtain 
$$\sum_{Bk\leq h<(1+\eta)Bk}\mu(h)g\left(\frac{h}{k}\right)={\textstyle\sum}_{1}+{\textstyle\sum}_{2}+{\textstyle\sum}_{3}\:,$$
where
\[
{\textstyle\sum}_{i}:=\sum_{Bk\leq h<(1+\eta)Bk}c_i(h)g\left(\frac{h}{k}\right)\:.\tag{3.1}
\]
From the definitions for $c_1$ and $c_4$ in Lemma \ref{lem31} we obtain:
$${\textstyle\sum}_{1}=-\sum_{\substack{s\geq w,\: t\geq w \\ Bk\leq st\gamma<(1+\eta)Bk}}\mu(\gamma)\sum(s,t)\:,$$
where
$$\sum(s,t) := \sum_{\substack{d_1d_2=s \\ d_1\leq w}}\mu(d_1)\sum_{\substack{e_1e_2=t \\ e_1\leq w}}\mu(e_1)g\left(\frac{d_1d_2e_1e_2\gamma}{k}\right)\:.$$
We now choose 
$$w=(Bk)^{2\delta_0},\ \text{for}\ \delta_0>0\ \text{fixed}\:,$$ depending only on $\delta, \eta$ and $D$. The subsequent steps are all valid, if $\delta_0$ is sufficiently small.\\
We partition the sum ${\textstyle\sum}_{1}$ as follows:
$${\textstyle\sum}_{1}={\textstyle\sum}_{1,1}+{\textstyle\sum}_{1,2}$$
with
$${\textstyle\sum}_{1,1}:=-\sum_{\substack{s\geq w,\: t\geq w \\ Bk\leq st\gamma<(1+\eta)Bk\\st\geq (Bk)^{10{\delta_0}}}}\mu(\gamma)\sum(s,t)$$
and
$${\textstyle\sum}_{1,2}:=-\sum_{\substack{s\geq w,\: t\geq w\\ Bk\leq st\gamma<(1+\eta)Bk\\st< (Bk)^{10\delta_0}}}\mu(\gamma)\sum(s,t)\:.$$
A trivial estimate of the sum ${\textstyle\sum}_{2}$ gives:
\[
{\textstyle\sum}_{2}=O\left((Bk)^{2\delta_0} \right)\:.\tag{3.2}
\]
The sums ${\textstyle\sum}_{1,1}$ and ${\textstyle\sum}_{3}$ are estimated by the same method.\\
In the next section we shall give the details of the estimate of ${\textstyle\sum}_{1,1}$, whereas the estimate of ${\textstyle\sum}_{1,2}$ will be given in the last section.
\section{The sums ${\textstyle\sum}_{1,1}$ and ${\textstyle\sum}_{3}$}
We now choose $s_0:=[\delta_1\log k]$, where $\delta_1>0$ depends only on 
$\delta_0$ and $D$. In the sequel we shall always assume that $\delta_1$ is sufficiently small.\\
For the tuplets $(d_1, d_2, e_1, e_2, \gamma)$ appearing in the sum ${\textstyle\sum}_{1,1}$ at least one of the two cases must hold:\\
$$1)\ \ \ \ \ \  \:d_1d_2\gamma e_1\leq (Bk)^{1-\delta_0}$$
$$2)\ \ \ \ \ \ d_1\gamma e_1e_2\leq (Bk)^{1-\delta_0},$$
since otherwise we would have the inequalities 
$$e_2<(Bk)^{2\delta_0}\ \ \text{and}\ \ d_2<(Bk)^{2\delta_0}\:,$$
which in turn would lead to the contradiction
$$st<(Bk)^{8\delta_0}\:.$$
Since the cases 1) and 2) are symmetric, it suffices to treat case 1 only.\\
We write
\[
{\textstyle\sum}_{1,1}={\textstyle\sum}_{1,1}^{(in)} + \sum_{C\in Int}{\textstyle\sum}_{1,1}(C)+{\textstyle\sum}_{1,1}^{(end)}\:, \tag{4.1}
\]
where 
\begin{align*}
&{\textstyle\sum}_{1,1}(C)=-\sum_{\substack{s\geq w,\: t\geq w\\ Ck\leq st\gamma<(C+1)k\\st\geq(Ck)^{10\delta_0}}}\mu(\gamma)\sum(s,t)\:,\\
& \text{$C$ in $\sum_{C\in Int}$ runs over a suitable interval,}\\
& {\textstyle\sum}_{1,1}^{(in)}\ \text{contains $<k$ initial terms, and}\\
& {\textstyle\sum}_{1,1}^{(end)}\ \text{contains $<k$ final terms.}
\end{align*}
We estimate ${\textstyle\sum}_{1,1}^{(in)}$ and ${\textstyle\sum}_{1,1}^{(end)}$ trivially
by 
\[
{\textstyle\sum}_{1,1}^{(in)} \ll k^{1+\epsilon}\tag{4.2}
\]
\[
{\textstyle\sum}_{1,1}^{(end)} \ll k^{1+\epsilon}\:,\ \ (\epsilon>0\ \text{arbitrarily small})\tag{4.3}
\]
and now describe the estimate of the sums ${\textstyle\sum}_{1,1}(C)$.

%
%
%

We set 
$$\vec{d}:=(d_1, d_2, \gamma, e_1),\ \Pi(\vec{d}\:):=d_1d_2\gamma e_1$$ 
and define $e_2^{(0)}$ and $e_2^{(1)}$ as follows:\\
Let 
$$e_2^{(0)}=e_2^{(0)}(\vec{d}\:):=\min\{e_2\::\: e_1e_2\geq w,\: d_1d_2\gamma e_1 e_2\geq Ck\}$$
$$e_2^{(1)}=e_2^{(1)}(\vec{d}\:):=\max\{e_2\::\:  d_1d_2\gamma e_1 e_2\leq (C+1)k\}$$
$$v_{max}:=\max\{v\::\: d_1d_2\gamma e_1(e_2^{(0)}+v)<d_1d_2\gamma e_1(e_2^{(1)}-v)\}\:.$$
We define
$$\sum (\vec{d}\:):=\sum_{Ck\leq \Pi(\vec{d}\:)e_2< (C+1)k}g\left(\frac{\Pi(\vec{d\:}) e_2}{k} \right)$$
and obtain
$$\sum (\vec{d}\:)=\sum_{v\leq v_{max}}\left( g\left(\frac{\Pi(\vec{d\:})( e_2^{(0)}+v)}{k} \right)+g\left(\frac{\Pi(\vec{d\:})( e_2^{(1)}-v)}{k} \right) \right)\:.$$
We now make use of the antisymmetry of the function $g$:
\[
g(C+1-x)=-g(x)\:.\tag{4.4}
\]
We replace the points 
$$\frac{\Pi(\vec{d\:})( e_2^{(1)}-v)}{k}$$
 by the \textit{mirror images} of the points 
 $$\frac{\Pi(\vec{d\:})( e_2^{(0)}+v)}{k}\:,$$
namely by
$$C+1-\frac{\Pi(\vec{d\:})( e_2^{(0)}+v)}{k}$$
and due to (4.1) we obtain
\[\sum (\vec{d}\:)=\sum_{v\leq v_{max}}\left( g\left(\frac{\Pi(\vec{d\:})( e_2^{(1)}-v)}{k} \right)-g\left(C+1-\frac{\Pi(\vec{d\:})( e_2^{(0)}+v)}{k} \right) \right)\:.\tag{4.5}
\]
We have
\[
\left|\frac{\Pi(\vec{d\:})( e_2^{(1)}-v)}{k}-\left(C+1-\frac{\Pi(\vec{d\:})( e_2^{(0)}+v)}{k}\right)\right|\leq \Pi(\vec{d\:}) k^{-1}\:.\tag{4.6}
\]
We now partition the range of summation over the points $\Pi(\vec{d\:})(e_2^{(1)}-v)k^{-1}$ in \textbf{cells of depth $s_0$} and assume that $\delta_1$ is chosen so small that $q_{s_0}(x)\leq k^{\delta_0/10}$ for all cells
$\mathcal{C}$ of depth $s_0$ with the possible exceptions of cells 
$\mathcal{C}$ of depth $s_0$ with total measure $k^{-2\delta_2}$ for some 
$\delta_2>0$.\\
%
%
%
%
%
This can be achieved because of Lemma \ref{lem219}.
Denote the union of these exceptional cells by $\mathscr{C}_{ex}$.\\
For the points 
$$\frac{\Pi(\vec{d\:})( e_2^{(1)}-v)}{k}\in \mathscr{C}_{ex}$$
we estimate the difference in (4.5) by the trivial bound
\[
g\left(\frac{h}{k}\right)=O(\log k)\:.\tag{4.7}
\]
We obtain
\[
{\textstyle\sum}_{ex}\ll k^{1-\delta_2}\:,\tag{4.8}
\]
where ${\textstyle\sum}_{ex}$ is extended over all tuplets $(d_1, d_2, e_1, e_2, \gamma)$ with
$$C\leq \frac{d_1d_2e_1e_2\gamma}{k}<C+1$$
and
$$\left\{ \frac{d_1d_2e_1e_2\gamma}{k}\right\}\in \mathscr{C}_{ex}\:.$$
For the other cells we use that
\[
g(x)=\mathcal{L}(x, s_0)+(-1)^{s_0+1}T^{s_0+1}\mathcal{W}(x)-2G(x)-2\delta(x)\tag{4.9}
\]
and substitute 
$$x^{(0)}:=\frac{\Pi(\vec{d\:})( e_2^{(1)}-v)}{k}\ \ \ \left(\text{resp.}\ \ \ x^{(1)}:=C+1-\frac{\Pi(\vec{d\:})( e_2^{(0)}+v)}{k}\right)\:.$$
This leads to the problem to estimate the differences
$$T^vl(x^{(0)})-T^vl(x^{(1)})$$
and 
$$G(x^{(0)})-G(x^{(1)})\:.$$
This can be done by appeal to Lemmas \ref{lem26}, \ref{lem216} and (4.6). The sum containing the terms $T^{s_0+1}W(x)$ can be estimated using Lemma \ref{lem211}. We obtain
\[
{\textstyle\sum}_{1,1}(C)\ll k^{1-\delta_2}\:.\tag{4.10}
\]
By (4.1), (4.2), (4.3) we get:
\[
{\textstyle\sum}_{1,1}\ll (Bk)^{1-\delta_2}\:.\tag{4.11}
\]
By the same method we derive
\[
{\textstyle\sum}_{3}\ll (Bk)^{1-\delta_2}\:.\tag{4.12}
\]
%
%
%
%
%
%
%
%
%
$$ $$
\section{The sum ${\textstyle\sum}_{1,2}$}
We have
$${\textstyle\sum}_{1,2}=-\sum_{\substack{s\geq w,\: t\geq w\\ Bk\leq st\gamma<Bk(1+\eta)\\st\leq (Bk)^{10\delta_0}}}\mu(\gamma)\sum(s,t)\:,$$
where
$$\sum(s,t)=\sum_{\substack{d_1d_2=s\\ d_1\leq w}}\mu(d_1)\sum_{\substack{e_1e_2=t\\ e_1\leq w}}\mu(e_1)g\left(\frac{d_1d_2e_1e_2\gamma}{k} \right)\:.$$
Therefore
\[
{\textstyle\sum}_{1,2}=-\sum_{Bk\leq u\gamma<Bk(1+\eta)}\mu(\gamma)A(u)g\left(\frac{u\gamma}{k} \right)\:, \tag{5.1}
\]
with
$$A(u):=\sum_{\substack{(s,t)\in\mathbb{N}^2:\\ st=u}}\ \sum_{\substack{d_1d_2=s\\ d_1\leq w}}\mu(d_1)\sum_{\substack{e_1e_2=t\\ e_1\leq w}}\mu(e_1)\:.$$
We observe that
\[
|A(u)|\leq d_4(u)\ll u^{\epsilon}\ \text{for all}\ \epsilon>0\:,\tag{5.2}
\]
where $d_4$ denotes the divisor function sum of order $4$.\\
Let
\begin{align*}
&\mathcal{R}=\{(U,V)\::\: U\geq 0, V\geq 0,\\
&\ \ \ \  \log B+\log k\leq U+V\leq \log B+\log k+\log(1+\eta),\ U\leq 10\delta_0\log k\}\:.\tag{5.3}
\end{align*}
We recursively define the sequence $(\mathcal{R}_l)$ of regions that exhaust $\mathcal{R}$.\\
Let $\mathcal{R}_0$ be the union of squares with sides parallel to the coordinate axes, the 
coordinates of whose vertices are integer multiples of $2^{-0}=1$ and side lengths
$2^{-0}=1$, that are completely contained in $\mathcal{R}$.

The recursion $l\rightarrow l+1$ : Assume that $\mathcal{R}_l$ has been defined. We let
$\mathcal{S}_l$ be the union of squares of sides parallel to the coordinate axes of
side lengths $2^{-(l+1)}$, the coordinates of their vertices are integer multiples of
$2^{-(l+1)}$, whose interiors are contained in the set $\mathcal{R}\setminus \mathcal{R}_l$. We set
$$\mathcal{R}_{l+1}=\mathcal{R}_l\cup \mathcal{S}_l\:.$$
We choose $l_0$ as the minimal $l$, such that 
\[
\max\{u\gamma\::\:(\log u, \log \gamma)\in Q\}-\min\{u\gamma\::\: (\log u, \log \gamma)\in Q\}\leq(\eta Bk)(Bk)^{-2\delta_0}\:,\tag{5.4}
\]
for all squares $Q$ of side lengths $2^{-l_0}$ contained in $\mathcal{R}$.\\
Since the boundaries of $\mathcal{R}$ are straight lines, the trivial bound (4.7) gives
\[
{\textstyle\sum}_{1,2}=-\sum_{(u,\gamma):(\log u,\log \gamma)\in \mathcal{R}_{l_0}}\mu(\gamma)A(u)g\left(\frac{u\gamma}{k}\right)+O\left(\eta Bk(Bk)^{-\delta_0}\right)\:. \tag{5.5}
\]
Let $Q$ be a square of $\mathcal{R}_{l_0}$ and 
$$\text{Exp}(Q):=\{(u,\gamma)\::\:(\log u, \log \gamma)\in Q\}\:.$$
We set
$$\text{Exp}(Q)=:[u_0, u_0+\Delta_u)\times[\gamma_0,\gamma_0+\Delta_\gamma)\:.$$
Because of (5.4) we have
\[
u_0\geq (Bk)^{4\delta_0}\:,\ \ \Delta_u\geq u_0(Bk)^{-2\delta_0}\:,\tag{5.6}
\]
\[
\gamma_0\geq \eta(Bk)^{1-10\delta_0}\:,\ \ \Delta_\gamma\geq \gamma_0(Bk)^{-2\delta_0}>k^{1+10\delta_0}\:,\tag{5.7}
\]
for $\delta_0$ sufficiently small.\\
We now estimate 
\[
{\textstyle\sum}_{1,2}(Q):=-\sum_{(u,\gamma)\in\text{Exp}(Q)}\mu(\gamma)A(u)g\left(\frac{u\gamma}{k}\right)\:. \tag{5.8}
\]
By the Cauchy-Schwarz inequality we have:
\[
\left|{\textstyle\sum}_{1,2}(Q)\right| \leq \left( \sum_{\gamma_0\leq \gamma<\gamma_0+\Delta_\gamma}\mu(\gamma)^2  \right)^{1/2}\left( \sum_{\gamma_0\leq \gamma<\gamma_0+\Delta_\gamma}\left( \sum_{u_0\leq u<u_0+\Delta_u}A(u)g\left(\frac{u\gamma}{k}\right) \right)^2 \right)^{1/2}.  \tag{5.9}
\]
We have
$$ \sum_{\gamma_0\leq \gamma<\gamma_0+\Delta_\gamma}\left( \sum_{u_0\leq u<u_0+\Delta_u}A(u)g\left(\frac{u\gamma}{k}\right) \right)^2
=\sum_{u_0\leq u_1, u_2<u_0+\Delta_u}A(u_1)A(u_2){\textstyle\sum}_{g}(Q,u_1, u_2)\:,$$
where
$${\textstyle\sum}_{g}(Q,u_1, u_2):=\sum_{\gamma_0\leq \gamma<\gamma_0+\Delta_\gamma}g\left(\frac{u_1\gamma}{k}\right)g\left(\frac{u_2\gamma}{k}\right)\:.$$
We now make use of the representation
\[
g(x)=\mathcal{L}(x, s_0)+(-1)^{s_0+1}T^{s_0+1}\mathcal{W}(x)-2G(x)-2\delta(x)\tag{5.10}
\]
which follows from Lemmas \ref{lem27} and \ref{lem210}.\\
We set
$$p(x, u_1, u_2):=(\mathcal{L}(u_1x, s_0)-2G(u_1x))(\mathcal{L}(u_2x, s_0)-2G(u_2x))$$
and replace ${\textstyle\sum}_{g}(Q,u_1, u_2)$ by
\[
{\textstyle\sum}_{g}^{'}(Q,u_1, u_2) :=\sum_{\gamma_0\leq \gamma<\gamma_0+\Delta_\gamma}p(\gamma, u_1, u_2) \tag{5.11}
\]
by removing the terms $T^{s_0+1}\mathcal{W}(\gamma)$, $\delta(\gamma)$ in (5.10).\\
By Lemmas \ref{lem210} and \ref{lem211} it follows that
\[
{\textstyle\sum}_{g} (Q,u_1, u_2)={\textstyle\sum}_{g}^{'}(Q,u_1, u_2)+O(\Delta_\gamma(Bk)^{-\delta_0})\:.\tag{5.12}
\]
We now partition the sum ${\textstyle\sum}_{g}^{'}(Q,u_1, u_2)$ as follows
\[
{\textstyle\sum}_{g}^{'}(Q,u_1, u_2)={\textstyle\sum}_{g,1}^{'}(Q,u_1, u_2)+{\textstyle\sum}_{g, 2}^{'}(Q,u_1, u_2)\:,\tag{5.13}
\]
where in ${\textstyle\sum}_{g,1}^{'}$ the summation is extended over all $\gamma$,
for which
$$\left[\left\{\frac{u_i\gamma}{k}\right\}-\frac{u_i}{k},\ \left\{\frac{u_i\gamma}{k}\right\}+\frac{u_i}{k}\right]\cap \mathscr{C}_{ex}=\emptyset\ \ (i=1, 2)\:,$$
and in ${\textstyle\sum}_{g,2}^{'}$  the summation is extended over the other
values of $\gamma$.\\
By the definition of $\mathscr{C}_{ex}$, for the $\gamma$ appearing in 
${\textstyle\sum}_{g,1}^{'}$  the intervals 
$$\left[\left\{\frac{u_1\gamma}{k}\right\}-\frac{u_1}{2k},\ \left\{\frac{u_1\gamma}{k}\right\}+\frac{u_1}{2k}\right]\ \ \text{resp.}\ \ \left[\left\{\frac{u_2\gamma}{k}\right\}-\frac{1}{2k},\ \left\{\frac{u_2\gamma}{k}\right\}+\frac{1}{2k}\right]$$
belong to cells $\mathcal{C}_1$ resp. $\mathcal{C}_2$ of depth $s_0$ as 
described in Lemma \ref{lem219}.\\
Let $\mathcal{I}$ be an interval of length 1. By Lemma \ref{lem219} we have
$$q_{s_0}(u;\gamma)<\exp(C_1s_0)\ \ \text{for all}\ \gamma=\frac{h}{k}\in\mathcal{I}$$
with the exception of at most
\[
\ll ku_i^{-1}\exp\left(-\frac{C_2s_0}{2}\right)\ \ \text{values}. \tag{5.14}
\]
In ${\textstyle\sum}_{g,1}^{'}$ we replace the terms $p(\gamma, u_1, u_2)$ by their
averages
$$k\int_{\gamma-\frac{1}{2k}}^{\gamma+\frac{1}{2k}} p(x, u_1, u_2)\:dx\:.$$
The intervals 
$$\mathcal{I}_i:=\left[\frac{u_i\gamma}{k}-\frac{u_i}{2k},\ \frac{u_i\gamma}{k}+\frac{u_i}{2k}\right],\ \ (i=1, 2)$$
are contained in some cells $\mathcal{C}_i$ $(i=1, 2)$ of depth $s_0$.\\
We set
\[
{\textstyle\sum}_{g,1}^{'}(Q, u_1, u_2)=I_{g,1}(Q, u_1, u_2)+R_1(Q, u_1, u_2)\:,\tag{5.15}
\]
where
$$I_{g, 1}(Q, u_1, u_2):=k\int_{\gamma_0-\frac{1}{2k}}^{\gamma_0+\Delta_\gamma+\frac{1}{2k}}(\mathcal{L}(u_1x, s_0)-G(u_1x))(\mathcal{L}(u_2x, s_0)-G(u_2x))\: dx\:.$$
By Lemma \ref{lem219} we have
\[
R_1(Q, u_1, u_2)\ll k\Delta_\gamma(Bk)^{-\delta_0}\:.\tag{5.16}
\]
We now write
\[
I_{g,1}(Q, u_1, u_2)=I_{g, 1}'(Q, u_1, u_2)+R_2(Q, u_1, u_2)\:,\tag{5.17}
\]
$$I_{g, 1}'(Q, u_1, u_2):=\int_{\gamma_0-\frac{1}{2k}}^{\gamma_0+\Delta_\gamma+\frac{1}{2k}} g\left( \frac{u_1x}{k} \right)g\left( \frac{u_2x}{k} \right)\: dx$$
and show that $R_2(Q, u_1, u_2)$ is small in average. We have
\begin{align*}
 \tag{5.18}&R_2(Q, u_1, u_2)=k\int_{\gamma_0-\frac{1}{2k}}^{\gamma_0+\Delta_\gamma+\frac{1}{2k}}((-1)^{s_0+1}T^{s_0+1}\mathcal{W}(u_1x)+\delta(u_1x))(\mathcal{L}(u_2x, s_0)+G(u_2x))\:dx\\
&\ \ +k\int_{\gamma_0-\frac{1}{2k}}^{\gamma_0+\Delta_\gamma+\frac{1}{2k}}((-1)^{s_0+1}T^{s_0+1}\mathcal{W}(u_2x)+\delta(u_2x))(\mathcal{L}(u_1x, s_0)+G(u_1x))\:dx\\
&\ \ +k\int_{\gamma_0-\frac{1}{2k}}^{\gamma_0+\Delta_\gamma+\frac{1}{2k}}((-1)^{s_0+1}T^{s_0+1}\mathcal{W}(u_1x)+\delta(u_1x))((-1)^{s_0+1}T^{s_0+1}\mathcal{W}(u_2x)+\delta(u_2x))\: dx\:. 
\end{align*}
We apply Lemma \ref{lem213} for the estimate of 
$$\int_0^1\left| T^{s_0+1}\mathcal{W}(x) \right|^2\: dx$$
and obtain by the Cauchy-Schwarz inequality
\[
\left(\sum_Q{}^{'} R_2(Q, u_1, u_2)^2 \right)^{1/2} \ll\eta(Bk)^{1-\delta_0}\:.\tag{5.19}
\]
In $\sum_Q{}^{'} $ the sum is extended over all squares $Q$ whose union is $\mathcal{R}_{l_0}$.\\
We replace the integrands $g(u_ix/k)$ in (5.17) by finite partial sums of their 
Fourier series 
$$g(u_i x):=h(x, u_i)+r(u_i, x)\:,$$
where
$$h(x, u_i):=\sum_{1\leq n\leq (Bk)^{\delta_4}}\frac{d(n)}{n}\:\sin(2\pi n u_i x)\:,\ \ (i=1, 2)\ (\delta_4>0)\:.$$
By Parseval's equation we obtain
\[
\left(\sum_Q{}^{'} r(u_i, x)^2 \right)^{1/2} \ll\eta(Bk)^{1-\delta_4}\:.\tag{5.20}
\]
We finally also replace ${\textstyle\sum}_{g,2}^{'}(Q, u_1, u_2)$ by
$$\sum_\gamma{}^{'}\int_{\gamma-\frac{1}{2k}}^{\gamma+\frac{1}{2k}}h(x, u_1)h(x, u_2)\: dx\:,$$
where $\gamma$ is extended over the same values as in formula (5.13).\\
From (5.9), (5.11), (5. 12), (5.13), (5.14), (5.15), (5.16), (5.19) and (5.20) we obtain:
\begin{align*}
\tag{5.21} {\textstyle\sum}_{1,2}(Q)&\ll k^\epsilon \Delta_\gamma^{1/2}\sum_{1\leq n_1, n_2\leq \eta(Bk)^{\delta_1}}\frac{d(n_1)}{n_1}\frac{d(n_2)}{n_2} \\
&\times\sum_{u_0\leq u_1, u_2<u_0+\Delta_u}\int_{\gamma_0}^{\gamma_0+\Delta_\gamma}\sin(2\pi n_1u_1v)\sin(2\pi n_2u_2v)\: dv+R(Q)\:,
\end{align*}
where 
$$\sum_Q{}^{'}|R(Q)|\ll \eta(Bk)^{1-\delta_5}\ \ (\delta_5>0)\:.$$
From the identity
$$\int_0^1\sin(2\pi n_1u_1v)\sin(2\pi n_2u_2v)\: dv = \left\{ 
  \begin{array}{l l}
    1/2\:, & \quad \text{if $n_1u_1=n_2u_2$}\vspace{2mm}\\ 
   0\:, & \quad \text{otherwise}\:,\\
  \end{array} \right.
\nonumber$$
we obtain
\[
 {\textstyle\sum}_{1,2} \ll \eta(Bk)^{1-\delta_6}\ \ (\delta_6>0)\:.  \tag{5.22}
\]
$$ $$
\section{Conclusion}
Theorem \ref{main} now follows from (3.1), (3.2), (4.11), (4.12) and (5.22).
\vspace{10mm}

\end{document}